\documentclass[a4paper,10pt,reqno]{amsart}
\usepackage[utf8]{inputenc}
\usepackage{amsmath,amsfonts,amssymb,mathrsfs,latexsym,bbm,enumitem,xcolor}
\usepackage[all,cmtip]{xy}
\newtheorem{definition}{Definition}
\newtheorem{theorem}[definition]{Theorem}
\newtheorem{lemma}[definition]{Lemma}
\newtheorem{coro}[definition]{Corollary}
\newtheorem{proposition}[definition]{Proposition}
\newtheorem{remark}[definition]{Remark}
\begin{document}
\title{The equivariant Milnor--Witt motive of \(\overline{\mathcal{M}}_{1,2}\)}
\author{Nanjun Yang}
\address{Nanjun Yang\\Yanqi Lake Beijing Institute of Mathematical Sciences and Applications\\Huai Rou District\\Beijing China}
\email{ynj.t.g@126.com}
\keywords{Milnor-Witt motives, Moduli space of curves}
\subjclass{14F42,14H15}
\begin{abstract}
We provide a decomposition of the equivariant Milnor-Witt motive for the moduli space of stable curves $\overline{\mathcal{M}}_{1,2}$. As a result, the equivariant Chow-Witt ring $\widetilde{CH}^*(\overline{\mathcal{M}}_{1,2})$ is fully determined.
\end{abstract}
\maketitle

\section{Introduction}

The moduli spaces of stable curves, $\overline{\mathcal{M}}_{g,n}$, are central objects of study in algebraic geometry, acting as classifying spaces that bridge topology, geometry, and mathematical physics. A fundamental goal is to understand their algebraic cycles and motives. In genus zero, the geometry is remarkably well-behaved: $\overline{\mathcal{M}}_{0,n}$ is completely cellular. Consequently, its classical integral Voevodsky motive decomposes strictly as a direct sum of shifted Tate motives, $\mathbb{Z}(i)[2i]$. However, while this classical motivic framework is powerful, its Betti realization only captures the singular cohomology of the complex points of the moduli space.

To access finer topological data—specifically, the singular cohomology of real points—this rigid classical structure must be upgraded to include "quadratic" information. Recently, this was successfully achieved by lifting computations into the realm of the Chow-Witt ring (\cite{F1}). Using the Milnor–Witt (MW) motive developed by Calmès, Déglise, and Fasel (\cite{BCDFO}), the classical category is enriched by the Grothendieck-Witt ring of quadratic forms. In this refined setting, the Milnor–Witt motive of $\overline{\mathcal{M}}_{0,n}$ expands to a direct sum of $\mathbb{Z}(i)[2i]$ and $\mathbb{Z}/\eta(i)[2i]$ over $\mathbb{R}$ (\cite{FY},\cite{EHKR}), capturing topological nuances invisible to standard Voevodsky motives.

Extending this quadratic enrichment from genus zero to genus one presents profound structural and homological challenges. The primary obstacles are the presence of stacky torsion (due to curves with automorphisms) and intricate boundary singularities. For classical Chow motives with rational coefficients, the limitations are well understood: the rational motive of $\overline{\mathcal{M}}_{1,n}$ is of pure Tate type for $n \leq 10$, but non-vanishing odd cohomology breaks this property for $n > 10$. With integral coefficients, remarkable progress has been made for small $n$, such as Bishop's analysis of the integral Chow ring of $\mathcal{M}_{1,n}$ for $n \leq 10$ (\cite{B}), and Newman's proof that the integral motive with compact support of $\overline{\mathcal{M}}_{1,n}$ is mixed Tate for $n \leq 4$ (\cite{N}).

These classical integral computations fundamentally rely on exhaustive characteristic class tracking and boundary stratifications. However, adapting these algebraic stratification techniques to the Milnor–Witt category introduces structural complications. Because the Hopf element $\eta$ does not vanish, explicit block-by-block splittings inevitably tangle the motive into a complex web of mapping cones. In this paper, we do utilize such an explicit mapping-cone decomposition of $\overline{\mathcal{M}}_{1,1}$ (Theorem \ref{11}) to deduce the explicit additive group structure of $\overline{\mathcal{M}}_{1,2}$. However, this formulation obscures the multiplicative structure. To compute the full Chow-Witt ring structure, we cannot rely on this explicit mapping-cone breakdown; a cleaner, intrinsic approach is required.

Denote by  $\mathrm{inf}:\overline{\mathcal{M}}_{1,1}\to\overline{\mathcal{M}}_{1,2}$ the infinity section. Our main theorem is the following (Theorem \ref{12}):

\begin{theorem}
The composite
$$\mathbb{Z}(\overline{\mathcal{M}}_{1,2}\setminus \mathrm{inf}) \to \mathbb{Z}(\overline{\mathcal{M}}_{1,2}) \to \mathbb{Z}(\overline{\mathcal{M}}_{1,1})$$
is an isomorphism in $\widetilde{DM}(k)$
and
$$\mathbb{Z}(\overline{\mathcal{M}}_{1,2}) \cong \mathbb{Z}(\overline{\mathcal{M}}_{1,1}) \oplus Th_{\overline{\mathcal{M}}_{1,1}}(O(-1))$$
as Milnor–Witt motives, where $O(1)$ is the Hodge bundle generating $\mathrm{Pic}(\overline{\mathcal{M}}_{1,1}) \cong \mathbb{Z}$ and $Th_{\overline{\mathcal{M}}_{1,1}}(O(-1))$ is the Thom space of $O(-1)$.
\end{theorem}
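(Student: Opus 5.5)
The plan is to realize $\overline{\mathcal{M}}_{1,2}$ as (essentially) the universal curve over $\overline{\mathcal{M}}_{1,1}$ and exploit the structure of that family. Concretely, the forgetful map $\pi:\overline{\mathcal{M}}_{1,2}\to\overline{\mathcal{M}}_{1,1}$ has the infinity section $\mathrm{inf}$ (the first marked point) as a section, $\pi\circ\mathrm{inf}=\mathrm{id}$. Then the composite $\mathbb{Z}(\overline{\mathcal{M}}_{1,1})\to\mathbb{Z}(\overline{\mathcal{M}}_{1,2})\to\mathbb{Z}(\overline{\mathcal{M}}_{1,1})$ is the identity, so $\mathbb{Z}(\overline{\mathcal{M}}_{1,1})$ splits off $\mathbb{Z}(\overline{\mathcal{M}}_{1,2})$.

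The complementary summand should come from the Gysin triangle for the closed embedding $\mathrm{inf}$, which is a smooth substack of codimension one:
$$\mathbb{Z}(\overline{\mathcal{M}}_{1,2}\setminus\mathrm{inf})\to\mathbb{Z}(\overline{\mathcal{M}}_{1,2})\to Th_{\overline{\mathcal{M}}_{1,1}}(N_{\mathrm{inf}})\to^{+}.$$
The normal bundle of the section $\sigma$ of the universal curve is $\sigma^*T_{C/\overline{\mathcal{M}}_{1,1}}$, which is the dual of the Hodge bundle, i.e.\ $O(-1)$. This identifies the third term with $Th_{\overline{\mathcal{M}}_{1,1}}(O(-1))$. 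The Gysin triangle in $\widetilde{DM}$ for stacks is available via the equivariant/quotient presentation $\overline{\mathcal{M}}_{1,1}\simeq$ a quotient of weighted projective data by $\mathbb{G}_m$, so I would work with explicit models.

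The key step is the first assertion: the composite $\mathbb{Z}(\overline{\mathcal{M}}_{1,2}\setminus\mathrm{inf})\to\mathbb{Z}(\overline{\mathcal{M}}_{1,1})$ is an isomorphism. Geometrically, $\overline{\mathcal{M}}_{1,2}\setminus\mathrm{inf}$ should be the complement of the zero section of the universal Weierstrass family. Using the model $y^2z=x^3+axz^2+bz^3$ over $[(\mathbb{A}^2\setminus 0)/\mathbb{G}_m]$ with weights $(4,6)$, removing the point at infinity leaves the affine cubic $y^2=x^3+ax+b$. This is a family of affine curves over $\mathbb{A}^2\setminus 0$, and after including the degenerate stable models it should become an affine bundle, or at least an $\mathbb{A}^1$-contractible family, over the base. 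I would try to show this by projecting to the coordinate pair $(x,y)$: the map $(a,b,x,y)\mapsto(a,x,y)$ with $b=y^2-x^3-ax$ is an isomorphism of the total space with $\mathbb{A}^3$ minus a locus, equivariantly. From there one compares motives, using $\mathbb{A}^1$-invariance of MW-motives together with a localization argument for the removed locus $a=b=0$.

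I expect this step to be the main obstacle, because the stable-curve compactification (nodal fibers, the $\mathbb{G}_m$-stabilizers at $j=0,1728$, and the weighting) must be handled carefully in the MW setting, where $\eta$ prevents naive cancellations. In particular, the explicit decomposition of Theorem~\ref{11} may be needed to check that the comparison map induces isomorphisms on all MW-motivic cohomology groups. Once the first assertion holds, the Gysin triangle splits: composing the isomorphism with its inverse shows the first map of the triangle admits a retraction. This gives $\mathbb{Z}(\overline{\mathcal{M}}_{1,2})\cong\mathbb{Z}(\overline{\mathcal{M}}_{1,1})\oplus Th_{\overline{\mathcal{M}}_{1,1}}(O(-1))$.
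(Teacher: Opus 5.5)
The reduction in your proposal matches the paper. You use the Gysin triangle for the codimension-one section $\mathrm{inf}$ with $N_{\mathrm{inf}}\cong O(-1)$, and your identification $\sigma^*T_{C/\overline{\mathcal{M}}_{1,1}}\cong\mathbb{E}^{\vee}$ is a fine substitute for the paper's explicit computation. You also note correctly that this triangle splits once $p\circ i$ is an isomorphism. The splitting from $p\circ\mathrm{inf}=\mathrm{id}$ in your first paragraph is correct but does not help: by the octahedral axiom, identifying its complement $\mathrm{fib}(p_*)$ with $Th_{\overline{\mathcal{M}}_{1,1}}(O(-1))$ is equivalent to $p\circ i$ being an isomorphism.

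\textbf{The main gap.} The first assertion carries all the content of the theorem, and it is not proved. The heuristic you offer for it is false. Over a point with $4a^3+27b^2\neq0$, the fibre of $\overline{\mathcal{M}}_{1,2}\setminus\mathrm{inf}\to\overline{\mathcal{M}}_{1,1}$ is the punctured elliptic curve $\{y^2=x^3+ax+b\}$. It has nontrivial $H^1$ and is not $\mathbb{A}^1$-contractible, and the punctured nodal fibres are not contractible either. So no affine-bundle or fibrewise argument exists; the isomorphism is global.

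Your observation that $W=\{y^2=x^3+ax+b\}$ is equivariantly $V_{2,3,4}$ via $(x,y,a)$ is the right input, but the computation then has to be done:
\begin{enumerate}
\item For $W_1=W\setminus 0$ one has $[W_1/\mathbb{G}_m]\simeq(O(2)\oplus O(3)\oplus O(4))^{\times}$ over $\mathbb{P}^\infty$. Its motive is $\mathbb{Z}\oplus\mathbb{Z}/\eta(1)[2]\oplus\bigoplus_{i\ge1}C(24\mathrm{Id}_{\mathbb{Z}/\eta})(2i+1)[4i+1]$.
\item The removed locus $\{a=b=0\}$ is the punctured cusp $\varphi\mapsto(\varphi^2,\varphi^3,0)$, i.e.\ $O(1)^{\times}$. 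Its motive is $\mathbb{Z}$ by Lemma \ref{o1}.
\item This gives a triangle $\mathbb{Z}([W_2/\mathbb{G}_m])\to\mathbb{Z}([W_1/\mathbb{G}_m])\to\mathbb{Z}(2)[4]$.
\item You must show the second map is $24\partial$ on the $\mathbb{Z}/\eta(1)[2]$ summand; the cycle $\{f^3=g^2,\,h=0\}$ has class $24\,c_1(O(1))^2$.
\end{enumerate}
This turns that summand into $C(24\partial)(1)[1]$, and only then does the answer match Theorem \ref{11}.

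\textbf{The second gap.} Even that yields only an abstract isomorphism, while the statement and the splitting need the specific composite $p\circ i$ to be invertible. The paper handles this as follows.
\begin{enumerate}
\item It maps the localization triangle of $[W_2/\mathbb{G}_m]\subset[W_1/\mathbb{G}_m]$ to the Gysin triangle of $\overline{\mathcal{M}}_{1,1}\subset\mathbb{P}^\infty$ (diagram \eqref{g}).
\item It checks that $Hom(p\circ i,A)$ is bijective for $A=\mathbb{Z},\mathbb{Z}(2)[3],\mathbb{Z}(2)[4],\mathbb{Z}/\eta(n)[2n],\mathbb{Z}/\eta(n)[2n-1]$ with $n$ odd.
\item It passes to all summands of Theorem \ref{11} by Lemma \ref{cone}, and concludes by Lemma \ref{al} and Yoneda.
\end{enumerate}
This is not formal. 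For $\mathbb{Z}/\eta(n)[2n-1]$ one must show that the induced map $\mathrm{Ker}(u)\to\mathrm{Ker}(a^*)$, which is multiplication by $24$, is surjective before the snake lemma applies.

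Your ``isomorphism on all MW-motivic cohomology groups'' criterion also needs justification. It yields an isomorphism in $\widetilde{DM}(k)$ here only because of the explicit decomposition and the finiteness in Lemma \ref{al}, since the motives involved are infinite sums and not compact. Your proposal leaves this step out.
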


Combining the theorem above and the presentation of the total Chow-Witt ring of $\overline{\mathcal{M}}_{1,1}$ recently established in \cite[Theorem 5.5.3]{LM}, we determine the Chow-Witt ring $\widetilde{CH}^*(\overline{\mathcal{M}}_{1,2})$ (Theorem \ref{ringex}).

Denote by $GW(k)$ the Grothendieck-Witt ring of quadratic forms on $k$, by $h\in GW(k)$ the hyperbolic element and by $I(k)$ its fundamental ideal.

\begin{theorem}\label{ringex}
The $\widetilde{CH}^*(\overline{\mathcal{M}}_{1,2})$ is isomorphic to
$$\frac{GW(k)[TH,V,T^2,T',H']}{\begin{pmatrix}I(k)T',I(k)H',I(k)TH,I(k)T^2,V^2,(TH)^2-2hT^2,hV,24T^2,12T^2TH-VT^2,THV,\\H'V,T^2H'-THT',H'^2-2hT',H'^2-THH',T'^2-T^2T',T'H'-T^2H',12T^2H'-VT'\end{pmatrix}}$$
as $\mathbb{N}$-graded algebras, where $\mathrm{deg}(TH)=\mathrm{deg}(V)=\mathrm{deg}(H')=1$ and $\mathrm{deg}(T^2)=\mathrm{deg}(T')=2$.
\end{theorem}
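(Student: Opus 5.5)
The plan is to combine the motivic splitting of the main theorem (Theorem \ref{12}) with the presentation of $\widetilde{CH}^*(\overline{\mathcal{M}}_{1,1})$ from \cite[Theorem 5.5.3]{LM}.

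\textbf{Additive structure.} Applying $\widetilde{CH}^*$ to $\mathbb{Z}(\overline{\mathcal{M}}_{1,2})\cong\mathbb{Z}(\overline{\mathcal{M}}_{1,1})\oplus Th_{\overline{\mathcal{M}}_{1,1}}(O(-1))$ gives a splitting
$$\widetilde{CH}^*(\overline{\mathcal{M}}_{1,2})\cong \widetilde{CH}^*(\overline{\mathcal{M}}_{1,1})\oplus \widetilde{CH}^{*-1}(\overline{\mathcal{M}}_{1,1},O(-1)).$$
Here the first summand is the image of $\pi^*$ for the forgetful map $\pi$, which is split by $\mathrm{inf}^*$. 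The second summand arises from the Thom isomorphism and is identified with the pushforward $\mathrm{inf}_*$. Indeed, $\mathrm{inf}$ has normal bundle $O(-1)$, up to sign conventions that I would check, so the Thom class is the image of $\mathrm{inf}_*$, twisted by $\det N$.

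From \cite{LM} I take the generators $TH$, $V$ and $T^2$ of $\widetilde{CH}^*(\overline{\mathcal{M}}_{1,1})$ in degrees $1,1,2$, together with their relations. These are the relations of the statement involving only $TH,V,T^2$, namely $I(k)TH$, $I(k)T^2$, $V^2$, $(TH)^2-2hT^2$, $hV$, $24T^2$, $12T^2TH-VT^2$ and $THV$; I would quote them and pull them back along $\pi$. I then define $H'=\mathrm{inf}_*(1)$ in degree 1, using the twisted coefficient that makes it untwisted on $\overline{\mathcal{M}}_{1,2}$, and $T'=\mathrm{inf}_*(\text{the degree-one twisted generator})$ in degree 2. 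The twisted groups $\widetilde{CH}^*(\overline{\mathcal{M}}_{1,1},O(1))$ should also be available from \cite{LM}, since there the total ring is computed. I would check that the twisted group is a free rank-one module over the appropriate subring, generated by $1$ and a class mapping to $T'$. This yields the additive generators and the relations $I(k)T'$ and $I(k)H'$.

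\textbf{Multiplicative relations.} These follow from the projection formula $\pi^*(a)\cdot\mathrm{inf}_*(b)=\mathrm{inf}_*(a|\cdot b)$ and the self-intersection formula $\mathrm{inf}_*(b)\cdot\mathrm{inf}_*(c)=\mathrm{inf}_*(b\,c\,e(N))$. Here $e(N)$ is the Euler class of $O(-1)$ in twisted Chow–Witt groups, which \cite{LM} identifies in terms of $TH$ (the class $H$) and $T$.

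The self-intersection formula gives $H'^2=\mathrm{inf}_*(e(O(-1)))$. Expressing this Euler class in two ways yields $H'^2=THH'$ and $H'^2=2hT'$, presumably because the Euler class of a line bundle has hyperbolic ambiguity. The projection formula gives $H'V=0$ (because $\mathrm{inf}^*V\cdot 1$ vanishes in the twisted group), as well as $T^2H'=THT'$, $T'H'=T^2H'$, $T'^2=T^2T'$ and $12T^2H'=VT'$. Each relation is verified by reducing to a product in $\widetilde{CH}^*(\overline{\mathcal{M}}_{1,1},O(\pm1))$ and using the module structure from \cite{LM}.

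Finally, I would verify completeness by comparing degreewise: the quotient ring should surject onto the splitting computed above, with matching ranks and torsion over $GW(k)$ in each degree. This can be checked after passing to the fiber product description $\widetilde{CH}=CH\times_{Ch}\ker(\partial)$, testing separately on the Chow ring (compare with the known $CH^*(\overline{\mathcal{M}}_{1,2})$) and on the $I^j$-cohomology part.

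\textbf{Main obstacle.} I expect the hardest step to be computing the twisted Euler class $e(O(-1))$ and the twisted-coefficient module $\widetilde{CH}^*(\overline{\mathcal{M}}_{1,1},O(1))$ precisely enough to pin down $H'^2$ and $T'^2$ in the correct form. Sign and twist bookkeeping (the $\langle -1\rangle$ factors) is delicate there. I would first try to compute these in the $W$-cohomology of the presentation $\overline{\mathcal{M}}_{1,1}=[(\mathbb{A}^2\setminus 0)/\mathbb{G}_m]$ with weights $4,6$, where the twisted Euler class is explicit.
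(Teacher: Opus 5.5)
Your overall route is the paper's. You take the splitting $\widetilde{CH}^*(\overline{\mathcal{M}}_{1,2})=p^*\widetilde{CH}^*(\overline{\mathcal{M}}_{1,1})\oplus\mathrm{inf}_*\widetilde{CH}^{*-1}(\overline{\mathcal{M}}_{1,1},O(-1))$ from Theorem \ref{12}, compute products with the projection formula and the self-intersection formula $\mathrm{inf}^*\mathrm{inf}_*(b)=b\cdot e(O(-1))$, and then compare generators with \cite{LM}.

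\textbf{The gap.} The problem is your definition of $H'$, and it breaks exactly the relations that involve $H'$.
\begin{itemize}
\item For $\mathrm{inf}_*$ to land in the untwisted group $\widetilde{CH}^1(\overline{\mathcal{M}}_{1,2})$, its source must be $\widetilde{CH}^0(\overline{\mathcal{M}}_{1,1},O(-1))\cong 2\mathbb{Z}$ (Corollary \ref{ring11}). This group has no unit. Its generator is the class $H$ of \cite{LM}, so ``$H'=\mathrm{inf}_*(1)$'' is not a class in the ring you are presenting.
\item If you compute formally with it, you get $H'^2=\mathrm{inf}_*(e(O(-1)))=\mathrm{inf}_*(T)=T'$. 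This contradicts the target relation $H'^2=2hT'$: since $I(k)T'=0$ and $T'$ has infinite order, $2hT'=4T'\neq T'$. The ``hyperbolic ambiguity of the Euler class'' you invoke does not exist and cannot close this gap.
\item Your reason for $H'V=0$ also fails, because $V\cdot 1=V\neq 0$ in $\widetilde{CH}^1(\overline{\mathcal{M}}_{1,1})$.
\item The twisted module is not free of rank one either. It is generated over $\widetilde{CH}^*(\overline{\mathcal{M}}_{1,1})$ by $H$ and $T$, and it has torsion ($\mathbb{Z}/24$ and $2\mathbb{Z}/48$ in degrees $\geq 2$).
\end{itemize}

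\textbf{The repair} is what the paper does. Set $H'=\mathrm{inf}_*(H)$ and $T'=\mathrm{inf}_*(T)$, and take $e(O(-1))$ to be the generator $T$ of \cite{LM}. So the ``main obstacle'' you anticipate needs no new computation; the self-intersection formula is \cite[Theorem 3.3]{F}. Then:
\begin{itemize}
\item $H'^2=\mathrm{inf}_*(H\cdot H\cdot T)=\mathrm{inf}_*(2hT)=2hT'$, using $H^2=2h$ in \cite{LM}.
\item $TH\cdot H'=\mathrm{inf}_*(TH\cdot H)$ is the same class, which gives $H'^2=THH'$.
\item $H'V=\mathrm{inf}_*(HV)=0$, by $HV=0$.
\item $T'^2=\mathrm{inf}_*(T^3)=T^2T'$.
\item $T'H'=\mathrm{inf}_*(T^2H)=T^2H'=THT'$.
\item $VT'=\mathrm{inf}_*(VT)=\mathrm{inf}_*(12HT^2)=12T^2H'$.
\end{itemize}

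For completeness, compare degreewise directly. Over $GW(k)$ the twisted summand is generated by $H$, $T^{2n+1}$ and $T^{2n+2}H$ \cite[Proposition 5.3.1]{LM}, whose images are $H'$, $(T^2)^nT'$ and $TH(T^2)^nT'$. The relations reduce the presented algebra to exactly these generators over $\widetilde{CH}^*(\overline{\mathcal{M}}_{1,1})$.

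Your alternative check through the fibre product of $I^j$-cohomology with $\ker\partial$ over $Ch^*$ is not available here. That description requires $CH^*$ to have no $2$-torsion, and $CH^*(\overline{\mathcal{M}}_{1,1})$ contains $\mathbb{Z}/24$.
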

\textbf{Organization of the paper. }
Section \ref{eqmw} recalls the basic definitions and foundational constructions of equivariant Milnor--Witt motives, providing the categorical framework used throughout the work. 
Section \ref{m11} is devoted to the computation of the MW-motive of the moduli stack $\overline{\mathcal{M}}_{1,1}$. 
Given the decomposition of $\overline{\mathcal{M}}_{1,1}$ obtained in Theorem \ref{11} into the distinguished blocks, 
Section \ref{block} determines their respective MW-motivic cohomologies. Finally, Section \ref{m12} computes the MW-motive and the Chow-Witt ring of $\overline{\mathcal{M}}_{1,2}$.

\textbf{Conventions.}
Throughout, denote by $Sm/k$ the category of smooth separated schemes over a field $k$ with $\operatorname{char}(k)=0$ and by \(C(f)\) the mapping cone of a morphism \(f\). 

For any $X \in Sm/k$, we write $H_{MW}^{\ast,\ast}(X,\mathbb{Z})$ and $H_{M}^{\ast,\ast}(X,\mathbb{Z})$ for the Milnor--Witt motivic cohomology (\cite{BCDFO}) and motivic cohomology (\cite{MVW}) of $X$, respectively. In bidegree $(2n,n)$, these groups specialize to the Chow--Witt group $\widetilde{CH}^n(X)$ and the Chow group $CH^n(X)$, respectively. All \(Hom\)-groups are taken in the category \(\widetilde{DM}(k)\) of Milnor--Witt motives.

\textbf{Acknowledgement. }The author was partially supported by the National Natural Science Foundation of China (Grant No. 12201336). He would like to thank Jin Cao for careful reading of the text, as well as helpful discussions and suggestions.

\section{Equivariant Milnor--Witt motives}\label{eqmw}


In this section, we introduce the basic definition of equivariant Milnor--Witt motives, following the ideas of \cite{EG}.

Let \(G\) be a linear algebraic group and \(X\in Sm/k\) be a \(G\)-scheme. Suppose that for each \(n\in\mathbb{N}\), there is a linear \(G\)-representation \(V_n\) with an open set \(U_n\subseteq V_n\) such that
\begin{enumerate}
\item The \(U_n\) is \(G\)-stable and its \(G\)-action is free;
\item The \(X\times_GU_n\) is a smooth scheme;
\item The codimension of \(V_n\setminus U_n\) in \(V_n\) is \(\geq n\);
\item There is an equivariant map \(V_n\to V_{n+1}\) which maps \(U_n\) to \(U_{n+1}\).
\end{enumerate}

\begin{remark}
In practice, we can fix some \(V=V_1\) and define \(V_n=V^{\times n}\) by diagonal action. Define \(U_n=V^{\times n}\setminus(V\setminus U)^{\times n}\). Then we have equivariant maps \(\begin{array}{ccc}U_n&\to&U_{n+1}\\(x_1,\cdots,x_n)&\mapsto&(x_1,\cdots,x_n,0)\end{array}\).
\end{remark}

%
For any \(L\in Pic(X)\) and \(X\in Sm/k\), denote by \(Th(L)=\mathbb{Z}(L)/\mathbb{Z}(L^{\times})\) the Thom space.
\begin{definition}
For any \(X\in Sm/k\), \(L\in Pic(X)\), define the MW-motivic cohomology twisted by \(L\)
\[H^{p,q}_{MW}(X,\mathbb{Z},L)=H^{p+2,q+1}_{MW}(Th(L),\mathbb{Z}).\]
It only depends on the class \(L\in Pic(X)/2\).
\end{definition}
\begin{proposition}\label{appro}
The MW-motive \(\varinjlim_n\mathbb{Z}(X\times_GU_n)\) is independent of the choice of \((V_n,U_n)\).
\end{proposition}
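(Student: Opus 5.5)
The plan is to adapt the Edidin--Graham double fibration (Bogomolov) argument to $\widetilde{DM}(k)$. Let $(V_n,U_n)$ and $(W_n,U'_n)$ be two admissible systems. Form the product system $V_n\oplus W_n$ with the open set $U_n\times W_n\cup V_n\times U'_n$, or more simply compare both to $X\times_G(U_n\times U'_n)$. The group $G$ acts freely on $U_n\times U'_n$. There are two projections
\[X\times_G(U_n\times U'_n)\to X\times_G U_n,\qquad X\times_G(U_n\times U'_n)\to X\times_G U'_n.\]
The first is an open subscheme of the vector bundle $X\times_G(U_n\times W_n)\to X\times_GU_n$, and its complement $X\times_G(U_n\times(W_n\setminus U'_n))$ has codimension $\ge n$. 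Symmetrically for the second. So it suffices to prove two things. First, $\mathbb{Z}$ of a vector bundle $E\to Y$ is isomorphic to $\mathbb{Z}(Y)$, by $\mathbb{A}^1$-invariance of MW-motives, using Zariski local triviality (vector bundles over $X\times_GU_n$ descend, since the action is free and the quotient is a scheme). Second, the open immersion $E\setminus Z\to E$ with $\operatorname{codim}Z\ge n$ becomes an isomorphism after passing to the colimit over $n$.

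For the second point, I would use the Gysin/localization triangle in $\widetilde{DM}(k)$. Stratify $Z$ by smooth locally closed pieces, which is possible since $\operatorname{char}k=0$. Each stratum $Z_i$ of codimension $c\ge n$ contributes a cone of the form $Th(N_{Z_i})$, which is $\mathbb{Z}(Z_i)(c)[2c]$ twisted by the determinant of the normal bundle. Hence the cone $C_n$ of $\mathbb{Z}(E\setminus Z)\to\mathbb{Z}(E)$ is an iterated extension of such objects. It is therefore $(n)[2n]$-connective in the sense of Tate twists: for every fixed bidegree $(p,q)$ and fixed test object, $\mathrm{Hom}(C_n,\mathbb{Z}(q)[p])=0$ once $n>q$, or with a suitable bound. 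This uses the vanishing of MW-motivic cohomology $H^{p,q}_{MW}(-,\mathbb{Z},L)$ for negative weight $q<0$, via cancellation.

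Next I would arrange compatibility of the maps in $n$, using condition (4), so that the two comparison maps form morphisms of ind-systems:
\[\varinjlim_n\mathbb{Z}(X\times_GU_n)\leftarrow\varinjlim_n\mathbb{Z}(X\times_G(U_n\times U'_n))\to\varinjlim_n\mathbb{Z}(X\times_GU'_n).\]
Here the colimits are homotopy colimits in $\widetilde{DM}(k)$. The cone of each arrow is $\operatorname{hocolim}C_n$. To see that it vanishes, I would test against the compact generators $\mathbb{Z}(Y)(q)[p]$ for $Y$ smooth. Compactness gives $\mathrm{Hom}(\mathbb{Z}(Y)(q)[p],\operatorname{hocolim}C_n)=\varinjlim\mathrm{Hom}(\mathbb{Z}(Y)(q)[p],C_n)$. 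This should vanish because $C_n$ lies in the localizing subcategory generated by objects $\mathbb{Z}(W)(c)[2c]$ with $c\ge n$, and $\mathrm{Hom}(\mathbb{Z}(Y)(q)[p],\mathbb{Z}(W)(c)[2c])=0$ for $c>\dim Y+q$. The vanishing is obtained by duality or by Hom-vanishing into high Tate twists, using that $\widetilde{DM}$ has the Voevodsky-type cancellation and dimension bounds.

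The main obstacle is exactly this vanishing: controlling $\mathrm{Hom}$ from a compact object into $\mathbb{Z}(W)(c)[2c]$ for large $c$. My first attempt would go through the homotopy t-structure, together with the fact that $\mathbb{Z}(W)(c)[2c]$ is $(c-1)$-connected in the $\mathbb{A}^1$-connectivity sense (Morel's connectivity for $\widetilde{DM}$). Then $\mathrm{Hom}$ from $\mathbb{Z}(Y)(q)[p]$ vanishes once the connectivity exceeds $p+\dim Y$. If that fails, I would instead dualize on cohomology with an arbitrary coefficient $M$ and prove that $\varprojlim$ stabilizes. For fixed bidegree the restrictions are isomorphisms for large $n$ by purity, so the Milnor $\varprojlim^1$ terms vanish; this gives the isomorphism after testing against all objects.
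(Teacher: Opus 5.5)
Your architecture matches the paper's. You pass through an auxiliary system: you use $U_n\times U'_n$ and the paper uses $W_n=(U_n\times V'_n)\cup(V_n\times U'_n)$, which makes no difference. You strip the vector-bundle factor by $\mathbb{A}^1$-invariance, stratify the codimension $\ge n$ complement into smooth pieces and run Gysin triangles. You then conclude by compactness of $\mathbb{Z}(Y)(q)[p]$ and compact generation of $\widetilde{DM}(k)$.

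Where you genuinely differ is the key vanishing. The paper proves $\mathrm{Hom}(\mathbb{Z}(Y),Th(L)(i)[j])=0$ for $j-i>\dim Y$ in two steps:
\begin{enumerate}
\item For $L$ a line bundle on a smooth \emph{projective} $U$, it uses the duality of \cite[Theorem 6.1]{Y} to rewrite this group as twisted MW-motivic cohomology of $U\times Y$ in bidegree $(j+2\dim U,i+\dim U)$, which vanishes for $j-i>\dim Y$.
\item For arbitrary smooth $U$, it takes an snc compactification (resolution of singularities) and inducts on the number of boundary components.
\end{enumerate}
Your homotopy t-structure argument gives the same bound for every smooth $W$ at once. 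A Thom space of a rank $c$ bundle lies in $\widetilde{DM}^{eff}_{\ge c}$, since Mayer--Vietoris reduces this to $(\mathbb{Z}(W)\otimes\mathbb{Z}(c)[c])[c]$ with $\mathbb{Z}(c)[c]$ connective. Nisnevich cohomological dimension then kills $\mathrm{Hom}(\mathbb{Z}(Y),B)$ for $B\in\widetilde{DM}^{eff}_{\ge m}$ with $m>\dim Y$. This removes duality and compactification. The cost is importing the connectivity of MW-motives of smooth schemes (the homotopy t-structure on $\widetilde{DM}^{eff}(k)$), and using cancellation to move the twist of the test object. You also do not need the claim that the cone of each arrow is $\operatorname{hocolim}C_n$, which requires an enhancement. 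As in the paper, it suffices that $\mathrm{Hom}(K,-)$ becomes an isomorphism for $n\gg0$ for each compact generator $K$.

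Several auxiliary claims in your sketch are wrong and should be removed or corrected. None is load-bearing once the t-structure argument is in place.
\begin{enumerate}
\item The bound $\mathrm{Hom}(\mathbb{Z}(Y)(q)[p],\mathbb{Z}(W)(c)[2c])=0$ for $c>\dim Y+q$ is false. Take $Y=W=\operatorname{Spec}k$, $q=c-1$, $p=2c-1$: you get $H^{1,1}_{MW}(k)=K^{MW}_1(k)\neq0$ for every $c$. The correct condition is $c>\dim Y+p-q$, which is the paper's $j-i>\dim Y$. Likewise your connectivity threshold $p+\dim Y$ must be $p-q+\dim Y$ when $q<0$.
\item ``Localizing subcategory'' is the wrong closure, since it contains all shifts of its generators. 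What you actually use is that $C_n$ is a finite iterated extension of Thom spaces of bundles of rank $\ge n$. Vanishing on each of them, for the given $(p,q)$, then suffices.
\item MW-motivic cohomology does not vanish in negative weight: $H^{p,q}_{MW}(X,\mathbb{Z})\cong H^{p-q}(X,\mathbf{W})$ for $q<0$. So ``$\mathrm{Hom}(C_n,\mathbb{Z}(q)[p])=0$ once $n>q$'' fails. For the cone of $\{0\}\subset\mathbb{A}^c$ one gets $\mathrm{Hom}(\mathbb{Z}(c)[2c],\mathbb{Z}[c])=H^{-c,-c}_{MW}(k)=W(k)\neq0$.
\item The $\varprojlim$ fallback does not work. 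For an arbitrary coefficient $M$, purity gives no stabilization. With $M=\mathbb{Z}(q)[p]$ you only obtain an isomorphism on MW-motivic cohomology, which is not enough to invoke Yoneda.
\end{enumerate}
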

\begin{proof}
Suppose \(U,Y\in Sm/k\), \(L\in Pic(U)\) and \(U\) is projective. We have
\begin{align*}
	&Hom(\mathbb{Z}(Y),Th(L)(i)[j])\\
=	&Hom(\mathbb{Z}(Y)\otimes Th(L\otimes\omega_U),\mathbb{Z}(i+dim(U)+1)[j+2dim(U)+2])\\
= 	&H^{j+2dim(U),i+dim(U)}_{MW}(U\times Y,\mathbb{Z},(L\otimes\omega_U)|_{U\times Y})
\end{align*}
by \cite[Theorem 6.1]{Y}, which vanishes if \(j-i>dim(Y)\).

If \(U\) is not projective, by resolution of singularities we find \(U\subseteq\bar{U}\) such that the \(\bar{U}\) is smooth projective and \(Z=\bar{U}\setminus U\) is a simple normal crossing divisor \(\cup_{i=1}^mZ_i\). Suppose \(L=L'|_U\) where \(L'\in Pic(\bar{U})\). We prove by induction on \(m\) that
\begin{equation}\label{vanish}Hom(\mathbb{Z}(Y),Th(L)(i)[j])=0, j-i>dim(Y).\end{equation}
The \(m=0\) was established so suppose the \eqref{vanish} is true for \(0\leq m<m_0\). Denote by \(W_i=\bar{U}\setminus(\cup_{j=1}^iZ_i)\). There is an exact sequence
\[\xymatrix{Hom(\mathbb{Z}(Y),Th(L'|_{Z_{m_0}\cap W_{m_0-1}}\otimes det(N_{Z_{m_0}/\bar{U}}))(i+1)[j+1])\ar[r]&Hom(\mathbb{Z}(Y),Th(L'|_{W_{m_0}})(i)[j])\ar[d]\\& Hom(\mathbb{Z}(Y),Th(L'|_{W_{m_0-1}})(i)[j])},\]
hence the \eqref{vanish} holds for \(m=m_0\) if \(j-i>dim(Y)\) by induction hypothesis, completing the induction.

Let us suppose that we have an open immersion \(U\subseteq V\), \(L=L'|_U\) and \(Z=V\setminus U\) has codimension \(c\). We can take the smooth locus \(W\) of \(Z\) hence we obtain a smaller subset \(Z\setminus W\). Iterating we find a filtration of open subsets
\[U=U_1\subseteq U_2\subseteq\cdots\subseteq U_m=V\]
such that each \(Z_s=U_{s+1}\setminus U_s\) is smooth with codimension \(c_s\geq c\). We have an exact sequence
\[\cdots Hom(\mathbb{Z}(Y),Th(L'|_{U_s})(i)[j])\to Hom(\mathbb{Z}(Y),Th(L'|_{U_{s+1}})(i)[j])\to Hom(\mathbb{Z}(Y),Th(M)(i+c_s)[j+2c_s])\]
where \(M=L'|_{Z_s}\otimes N_{Z_s/U_{s+1}}\). So by \eqref{vanish}, we have
\[Hom(\mathbb{Z}(Y),Th(L'|_{U_s})(i)[j])=Hom(\mathbb{Z}(Y),Th(L'|_{U_{s+1}})(i)[j])\]
if \(j+c_s-i>dim(Y)+1\). So if \(j+c-i>dim(Y)+1\) we have
\[Hom(\mathbb{Z}(Y),Th(L)(i)[j])=Hom(\mathbb{Z}(Y),Th(L')(i)[j]).\]

Let us take \((V_n,U_n)\) and \(V'_n,U'_n\) which are approximations of \([X/G]\). Define \(W_n=(U_n\times V'_n)\cup(V_n\times U'_n)\). Then \(X\times_G(V_n\times U'_n)\subseteq X\times_GW\) with complement being \(X\times_G(U_n\times(V'_n\setminus U'_n))\). So we have
\[Hom(\mathbb{Z}(Y),\mathbb{Z}(X\times_G(V_n\times U'_n))(i)[j])=Hom(\mathbb{Z}(Y),\mathbb{Z}(X\times_GW_n)(i)[j])\]
if \(j+n-i>dim(Y)+1\). By \(\mathbb{A}^1\)-invariance the first term is equal to
\[Hom(\mathbb{Z}(Y),\mathbb{Z}(X\times_GU'_n)(i)[j]).\]
Let \(n\to\infty\) we obtain
\[Hom(\mathbb{Z}(Y),\varinjlim_n\mathbb{Z}(X\times_GU'_n)(i)[j])=Hom(\mathbb{Z}(Y),\varinjlim_n\mathbb{Z}(X\times_GW_n)(i)[j])\]
for all \(i,j\). By similar statement, we obtain
\[Hom(\mathbb{Z}(Y),\varinjlim_n\mathbb{Z}(X\times_GU_n)(i)[j])=Hom(\mathbb{Z}(Y),\varinjlim_n\mathbb{Z}(X\times_GW_n)(i)[j]).\]
Since \(\widetilde{DM}(k)\) is compactly generated, we see that
\[\varinjlim_n\mathbb{Z}(X\times_GU_n)=\varinjlim_n\mathbb{Z}(X\times_GU'_n)\]
as MW-motives.
\end{proof}
\begin{definition}
We define the equivariant MW-motive for $[X/G]$ to be:
\[\mathbb{Z}([X/G])=\varinjlim_n\mathbb{Z}(X\times_GU_n).\]

Its equivariant MW-motivic cohomologies are
\[H^{*,*}_{MW}([X/G],\mathbb{Z})=\varprojlim_nH^{*,*}_{MW}(X\times_GU_n,\mathbb{Z}).\]
\end{definition}
\begin{proposition}
If \([Y/H]\cong[X/G]\) as stacks where \(Y\in Sm/k\) and \(H\) is a linear algebraic group, satisfying (1)-(4) in the beginning of this section, we have \(\mathbb{Z}([X/G])\cong\mathbb{Z}([Y/H])\).
\end{proposition}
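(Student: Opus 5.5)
The plan is the standard Edidin--Graham double-approximation trick, reduced to Proposition \ref{appro}. Let \(\mathcal{X}=[X/G]\cong[Y/H]\). Set \(P=X\times_{\mathcal{X}}Y\). It is a smooth scheme, being an \(H\)-torsor over \(X\) and a \(G\)-torsor over \(Y\). It carries a free \(G\times H\)-action with \([P/(G\times H)]\cong\mathcal{X}\), \(P/H\cong X\) and \(P/G\cong Y\). I will prove
\[\mathbb{Z}([X/G])\cong\mathbb{Z}([P/(G\times H)])\cong\mathbb{Z}([Y/H]),\]
where the middle term is defined using approximations of \(G\times H\). By symmetry it suffices to prove the first isomorphism.

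\textbf{Approximation for \(G\times H\).} Take approximations \((V_n,U_n)\) for \(G\) and \((V'_n,U'_n)\) for \(H\), and use the product pairs \((V_n\times V'_n,\,U_n\times U'_n)\) for \(G\times H\). The codimension of the complement is still \(\geq n\), and the transition maps are products. The quotient \(P\times_{G\times H}(U_n\times U'_n)\) should be smooth: it is the quotient of \(P\times U_n\times U'_n\) by a free action, and it fibres over \(X\times_G U_n\), which is smooth by hypothesis (2). Proposition \ref{appro} then applies, so \(\mathbb{Z}([P/(G\times H)])\) is well defined. The same argument for \(H\) makes sense of \(\mathbb{Z}([Y/H])\) through this route.

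\textbf{Key step.} Since \(H\) acts freely on \(P\) with quotient \(X\), there is a natural map
\[\pi_n\colon P\times_{G\times H}(U_n\times U'_n)\to X\times_G U_n.\]
This map is a fibre bundle with fibre \(U'_n\), namely the bundle associated to the \(H\)-torsor \(P\times_G U_n\to X\times_G U_n\). It is an open subset of the vector bundle \(E_n=(P\times_G U_n)\times_H V'_n\), whose complement has codimension \(\geq n\).

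By homotopy invariance, \(\mathbb{Z}(E_n)\cong\mathbb{Z}(X\times_G U_n)\). The codimension estimate in the proof of Proposition \ref{appro} (the filtration by smooth strata together with the vanishing \eqref{vanish}) shows that for fixed \(Y'\in Sm/k\) and fixed \((i,j)\), the induced map
\[Hom(\mathbb{Z}(Y'),\mathbb{Z}(P\times_{G\times H}(U_n\times U'_n))(i)[j])\to Hom(\mathbb{Z}(Y'),\mathbb{Z}(E_n)(i)[j])\]
is an isomorphism once \(n\) is large. This is exactly the argument used there for \(X\times_G(V_n\times U'_n)\subseteq X\times_G W_n\). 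The maps \(\pi_n\) are compatible with the transition maps, so passing to the colimit gives an isomorphism on \(Hom(\mathbb{Z}(Y'),-(i)[j])\) for all compact generators. Since \(\widetilde{DM}(k)\) is compactly generated and compact objects commute with filtered homotopy colimits, \(\varinjlim\pi_n\) is an isomorphism.

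\textbf{Main obstacle.} I expect the main obstacle to be checking that the vector bundle \(E_n\) and the strata of its complement are smooth \emph{schemes}, not merely algebraic spaces. The estimate \eqref{vanish} was proved for schemes and uses resolution of singularities. To handle this, I would use hypothesis (2) together with the fact that \(E_n\) is a vector bundle over the smooth scheme \(X\times_G U_n\), so it is Zariski-locally trivial and hence a scheme. For the complement strata, I would argue that the complement is a closed subset of a scheme, so its reduced smooth strata are again schemes.
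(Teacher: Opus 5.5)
Your proposal is correct and follows essentially the same route as the paper: form the fibre product \(X\times_{[X/G]}Y\), approximate \(G\times H\) by products, and note that \(P\times_{G\times H}(U_n\times U'_m)\) is an open subset with high-codimension complement of a vector bundle over \(X\times_G U_n\). The paper then applies the estimate of Proposition \ref{appro}, homotopy invariance and compact generation, exactly as you do. The only differences are cosmetic: you use the diagonal index \(U_n\times U'_n\) where the paper takes a double limit over \((n,m)\), and you write down an explicit comparison map \(\varinjlim\pi_n\) and check that \(E_n\) is a scheme, which the paper leaves implicit (it only matches Hom groups).
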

\begin{proof}
Let \(Z=X\times_{[X/G]}Y\) via the isomorphism \([Y/H]\cong[X/G]\) and let \((V_n,U_n),(V'_m,U'_m)\) be approximation of \(G,H\), respectively. The \(Z\) is a principal \(G\)-bundle (resp. principal \(H\)-bundle) over \(X\) (resp. \(Y\)). For any \(W\in Sm/k\), by similar discussion as in Proposition \ref{appro}, we have
\begin{align*}
	&\varinjlim_{m\to\infty}Hom(\mathbb{Z}(W),\mathbb{Z}(Z\times_{G\times H}(U_n\times U'_m))(i)[j])\\
=	&\varinjlim_{m\to\infty}Hom(\mathbb{Z}(W),\mathbb{Z}(Z\times_{G\times H}(U_n\times V'_m))(i)[j])\\
=	&Hom(\mathbb{Z}(W),\mathbb{Z}(X\times_GU_n)(i)[j]).
\end{align*}
So we have
\[\varinjlim_{n,m\to\infty}Hom(\mathbb{Z}(W),\mathbb{Z}(Z\times_{G\times H}(U_n\times U'_m))(i)[j])=Hom(\mathbb{Z}(W),\mathbb{Z}([X/G])(i)[j]).\]
Similarly, we have
\[\varinjlim_{n,m\to\infty}Hom(\mathbb{Z}(W),\mathbb{Z}(Z\times_{G\times H}(U_n\times U'_m))(i)[j])=Hom(\mathbb{Z}(W),\mathbb{Z}([Y/H])(i)[j]).\]
Hence the statement follows.
\end{proof}
\begin{remark}
Any \(L\in CH^1([X/G])\) is given by \(L_n\in Pic(X\times_GU_n)\) such that \(L_n|_{X\times_GU_{n-1}}=L_{n-1}\). Hence by the same argument one can define
\[Th(L)=\varinjlim_nTh(L_n).\]
\end{remark}

Let us establish the following notations for computing MW-motivic cohomologies with $\mathbb{Z}/\eta$-coefficients.
\begin{definition} \label{hopfele}
Denote by \(\eta:\mathbb{Z}(1)[1]\to\mathbb{Z}\) the Hopf element, by \(\mathbb{Z}/\eta\) the mapping cone of \(\eta\) and by \(\partial:\mathbb{Z}/\eta\to\mathbb{Z}(1)[2]\) the boundary map.
\end{definition}
\begin{definition}
Define
\[E_1^{p,q}(A)=Hom_{\widetilde{DM}(k)}(A,\mathbb{Z}/\eta(q)[p])\]
for every MW-motive \(A\).
\end{definition}
The \(E_1^{*,*}(A)\) satisfies the long exact sequence
\[\cdots\to H^{p+1,q+1}_{MW}(A,\mathbb{Z})\xrightarrow{\eta}H^{p,q}_{MW}(A,\mathbb{Z})\to E_1^{p,q}(A)\xrightarrow{\partial}H^{p+2,q+1}_{MW}(A,\mathbb{Z})\to\cdots.\]
\begin{remark}\label{e1}
By \cite[Theorem 4.13]{Y1}, if \(CH^*(X)\) is \(2\)-torsion free for \(X\in Sm/k\), there is a Cartesian square
\[
\xymatrix@C=3.5pc{
    E_1^{2n,n}(X) \ar[r] \ar[d] & CH^{n+1}(X) \ar[d] \\
    CH^n(X) \ar[r]^-{Sq^2 \circ \bmod 2} & CH^{n+1}(X)/2
}.
\]
\end{remark}

\section{The MW-motive of \(\overline{\mathcal{M}}_{1,1}\)}\label{m11}
We first recall the computation of the MW-motive of \(\mathbb{P}^{\infty}\).

\begin{proposition}\label{Pn}
We have
\[\mathbb{Z}(\mathbb{P}^{\infty})\xrightarrow[\cong]{(p,c^{2i-1})}\mathbb{Z}\oplus\oplus_{i=1}^{\infty}\mathbb{Z}/\eta(2i-1)[4i-2],\]
where  \(c^{2i-1}\in{E}_1^{4i-2,2i-1}(\mathbb{P}^{\infty})\) is given by
\[(c_1(O(1))^{2i-1},c_1(O(1))^{2i})\in CH^{2i-1}(\mathbb{P}^{\infty})\oplus CH^{2i}(\mathbb{P}^{\infty}).\]
Similarly, we have
\[Th_{\mathbb{P}^{\infty}}(O(1))\xrightarrow[\cong]{d^{2i-1}}\oplus_{i=1}^{\infty}\mathbb{Z}/\eta(2i-1)[4i-2]\]
where \(d^{2i-1}\in{E}_1^{4i-2,2i-1}(Th_{\mathbb{P}^{\infty}}(O(1)))\) is given by
\[(c_1(O(1))^{2i-2},c_1(O(1))^{2i-1})\in CH^{2i-2}(\mathbb{P}^{\infty})\oplus CH^{2i-1}(\mathbb{P}^{\infty}).\]
\end{proposition}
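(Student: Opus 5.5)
We will prove the statement by working with the finite approximations $\mathbb{P}^n$ and then passing to the colimit. The target is the known splitting of the MW-motive of $\mathbb{P}^n$,
\[\mathbb{Z}(\mathbb{P}^n)\cong\mathbb{Z}\oplus\bigoplus_{1\le 2i-1<n}\mathbb{Z}/\eta(2i-1)[4i-2]\;\bigl(\oplus\,\mathbb{Z}(n)[2n]\text{ if }n\text{ is odd}\bigr).\]
This splitting is obtained inductively from the Gysin/cofiber triangle
\[\mathbb{Z}(\mathbb{P}^{n-1})\to\mathbb{Z}(\mathbb{P}^n)\to\mathbb{Z}(n)[2n].\]
The key geometric input is this: in the triangle $\mathbb{Z}(\mathbb{P}^{2i-1})\to\mathbb{Z}(\mathbb{P}^{2i})\to\mathbb{Z}(2i)[4i]$, the connecting map
\[\mathbb{Z}(2i)[4i-1]\to\mathbb{Z}(\mathbb{P}^{2i-1})\to\mathbb{Z}(2i-1)[4i-2]\]
is $\eta$ up to a unit of $GW(k)$. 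This follows from the computation of $\mathbb{P}^2$, where the cone is $\mathbb{Z}/\eta(1)[2]$, and propagates by periodicity. By contrast, for odd-to-even steps the attaching map vanishes, since its target lies in weight below the source and the relevant Hom-group is zero. Iterating gives the finite decomposition, and the inclusions $\mathbb{P}^n\subset\mathbb{P}^{n+1}$ are compatible with it. Since $\widetilde{DM}(k)$ is compactly generated and the colimit of the inclusions is the colimit of split inclusions of summands, we obtain the stated infinite sum; the top cells $\mathbb{Z}(n)[2n]$ for odd $n$ become absorbed into $\mathbb{Z}/\eta$-summands at the next stage.

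It remains to identify the maps. First, $p$ is the structure map. For the components $c^{2i-1}$, we use the long exact sequence for $E_1^{*,*}$ together with Remark \ref{e1}. Since $CH^*(\mathbb{P}^\infty)=\mathbb{Z}[c_1]$ is $2$-torsion free, $E_1^{4i-2,2i-1}(\mathbb{P}^\infty)$ is the fiber product of $CH^{2i-1}$ and $CH^{2i}$ over $CH^{2i}/2$ via $Sq^2$. On $\mathbb{P}^\infty$ we have $Sq^2(c_1^{2i-1})=(2i-1)c_1^{2i}\equiv c_1^{2i}$ mod $2$, so $(c_1^{2i-1},c_1^{2i})$ is a legitimate element. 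We then check that the induced map
\[\mathbb{Z}(\mathbb{P}^\infty)\to\bigoplus_i\mathbb{Z}/\eta(2i-1)[4i-2]\oplus\mathbb{Z}\]
is an isomorphism by testing it against the abstract decomposition. Concretely, the composite of $c^{2i-1}$ with the inclusion of the $j$-th summand is an endomorphism of $\mathbb{Z}/\eta(2j-1)[4j-2]$ (or a map between different summands). We compute this composite on motivic cohomology: $\mathrm{End}(\mathbb{Z}/\eta)$ detected on $CH$ via $\partial$ and the reduction map, the pair of Chow classes being generators of $CH^{2i-1}\oplus CH^{2i}$ restricted to the cell. This makes the matrix triangular with unit diagonal, hence invertible.

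For the Thom space, we use
\[Th_{\mathbb{P}^\infty}(O(1))\cong\mathbb{Z}(\mathbb{P}^{\infty})/\mathbb{Z}(\mathrm{pt})\,(\text{shift})\]
via the identification of $Th_{\mathbb{P}^n}(O(1))$ with $\mathbb{P}^{n+1}/\mathbb{P}^0$. Equivalently, the complement of the zero section of $O(1)$ over $\mathbb{P}^n$ is $\mathbb{A}^{n+1}\setminus0$, and the total space is $\mathbb{P}^{n+1}$ minus a point. This gives $Th(O(1))\cong\widetilde{\mathbb{Z}}(\mathbb{P}^\infty)$, the reduced motive, so the first part gives the decomposition. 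The Thom isomorphism shifts Chow degrees by one, so the classes $(c_1^{2i-1},c_1^{2i})$ on $\mathbb{P}^{\infty}$ correspond to $(c_1^{2i-2},c_1^{2i-1})$ on the base. We must also check that the Thom class twist (odd versus even twist by $O(1)$) matches $Sq^2$ on the base, where $Sq^2_{O(1)}(x)=Sq^2x+c_1x$. Indeed $Sq^2(c_1^{2i-2})+c_1^{2i-1}\equiv c_1^{2i-1}$, which is consistent.

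The main obstacle is the identification of the attaching map with $\eta$ up to a unit, together with the verification that the chosen classes in $E_1$ actually split each $\mathbb{Z}/\eta$-summand, since the fiber-square description only determines $E_1$ and the unit ambiguity must be resolved. I would first settle $\mathbb{P}^2$ explicitly, using the standard fact that $\mathbb{P}^2/\mathbb{P}^1$ has attaching map the Hopf map, and then use the projective bundle/periodicity $\mathbb{P}^{n+2}/\mathbb{P}^{n}\simeq \mathbb{P}^2/\mathbb{P}^0$ shifted by $(n)[2n]$ for even $n$.
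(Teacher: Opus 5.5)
\textbf{Verdict: your proposal is correct, but it does not follow the paper's route.} The paper proves the proposition in one line, by citing \cite[Theorem 5.11]{Y1} and \cite[Theorem 4.19]{Y2}. It imports Yang's general splitting of MW-motives of projective spaces and of the Thom space of $O(1)$, with the splitting maps already described by the $E_1$-classes of the Cartesian square in Remark~\ref{e1}. You rebuild the result from scratch, in three stages.
\begin{enumerate}
\item A cellular induction along $\mathbb{P}^{n-1}\subset\mathbb{P}^n$. The attaching map of the $2i$-cell is identified with $\eta$ via the Hopf map and $\mathbb{P}^{2i}/\mathbb{P}^{2i-2}\simeq Th_{\mathbb{P}^1}(O(1)^{\oplus 2i-1})\cong(\mathbb{P}^2/\mathbb{P}^0)(2i-2)[4i-4]$. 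This last isomorphism needs that in $\widetilde{DM}(k)$ a Thom space depends only on the rank and on the class of the determinant in $Pic/2$, together with $O(2i-1)\equiv O(1)$.
\item A colimit over split inclusions.
\item A Chow-detection argument. $\mathrm{End}(\mathbb{Z}/\eta)$ embeds into $\mathbb{Z}\times\mathbb{Z}$ via its actions on $CH^0$ and $CH^1$, with image $\{(a,b): a\equiv b \bmod 2\}$. So the comparison map is invertible once its diagonal entries act by $\pm1$ on both Chow groups, which is exactly what the choice $(c_1^{2i-1},c_1^{2i})$ guarantees.
\end{enumerate}
Your route is elementary and shows where $\eta$ comes from. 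It also shows why nothing beyond $\eta$ survives in $\widetilde{DM}(k)$: there is no analogue of the $\nu$-type component that keeps the top cell of $\mathbb{CP}^3$ from splitting off stably in topology. The citation instead buys generality (projective bundles over arbitrary bases, twisted versions) and packages the detection principle as a black box.

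\textbf{Details to repair.}
\begin{itemize}
\item The step where the attaching map vanishes is the even-to-odd step $\mathbb{P}^{2i}\subset\mathbb{P}^{2i+1}$, not odd-to-even.
\item ``Weight below the source'' is not a reason by itself, since $\eta$ also lowers weight. The critical component lies in $Hom(\mathbb{Z}(2i+1)[4i+1],\mathbb{Z}/\eta(2i-1)[4i-2])=E_1^{-3,-2}(k)$. This group vanishes because $H^{-3,-2}_{MW}(k)=0$ and $\eta\colon W(k)\to W(k)$ is bijective.
\item In the odd-to-even step you should also check that the attaching map has no components into the summands of $\mathbb{Z}(\mathbb{P}^{2i-2})$. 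This holds for the same kind of degree reasons.
\item The comparison matrix is in fact diagonal, not just triangular. For even $n\neq0$, $Hom(\mathbb{Z}/\eta,\mathbb{Z}/\eta(n)[2n])$ sits between $E_1^{2n-2,n-1}(k)$ and $E_1^{2n,n}(k)$, both zero by Remark~\ref{e1}. There are also no nonzero maps between $\mathbb{Z}$ and $\mathbb{Z}/\eta(2j-1)[4j-2]$ in either direction.
\item The family $(c^{2i-1})$ a priori lands in the product. This agrees with the direct sum here, because each compact generator maps nontrivially to only finitely many factors, as in Lemma~\ref{al}.
\item The identification $Th_{\mathbb{P}^n}(O(1))\simeq\mathbb{P}^{n+1}/\mathbb{P}^0$ involves no shift.
\end{itemize}
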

\begin{proof}
Follows from \cite[Theorem 5.11]{Y1} and \cite[Theorem 4.19]{Y2}.
\end{proof}
\begin{lemma}\label{o1}
We have
\[\mathbb{Z}(O_{\mathbb{P}^{\infty}}(1)^{\times})=\mathbb{Z}\]
in \(\widetilde{DM}(k)\).
\end{lemma}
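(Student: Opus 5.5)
The plan is to identify $O_{\mathbb{P}^{\infty}}(1)^{\times}$ concretely and show it is contractible at each finite stage in the colimit sense. Write $\mathbb{P}^{\infty}=\varinjlim_n\mathbb{P}^n$, with $\mathbb{P}^n=(\mathbb{A}^{n+1}\setminus 0)/\mathbb{G}_m$. The total space of $O(-1)$ minus its zero section is $\mathbb{A}^{n+1}\setminus 0$. For $O(1)$ the same holds up to composing the $\mathbb{G}_m$-action with inversion, which yields an isomorphic scheme over $\mathbb{P}^n$. Since a line bundle and its dual have isomorphic complements of zero sections via $v\mapsto v^{-1}$ fiberwise, we get $O_{\mathbb{P}^n}(1)^{\times}\cong\mathbb{A}^{n+1}\setminus 0$ as schemes, compatibly with the inclusions $\mathbb{P}^n\subseteq\mathbb{P}^{n+1}$ (i.e.\ $\mathbb{A}^{n+1}\setminus0\subseteq\mathbb{A}^{n+2}\setminus0$ via $x\mapsto(x,0)$). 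Equivalently, this is the equivariant viewpoint $[\,\mathbb{G}_m\text{-torsor}/\mathbb{G}_m]$ with $U_n=\mathbb{A}^{n+1}\setminus 0$, so $\mathbb{Z}(O(1)^{\times})=\varinjlim_n\mathbb{Z}(\mathbb{A}^{n+1}\setminus 0)$. This is the equivariant motive of a point with trivial group, which suggests the answer $\mathbb{Z}$.

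Next, compute $\mathbb{Z}(\mathbb{A}^{n+1}\setminus 0)$ in $\widetilde{DM}(k)$. By the standard splitting (the punctured affine space is $\mathbb{A}^1$-equivalent to $S^{2n+1,n+1}$, pointed by a rational point), we have
\[\mathbb{Z}(\mathbb{A}^{n+1}\setminus 0)\cong\mathbb{Z}\oplus\mathbb{Z}(n+1)[2n+1].\]
The transition map $\mathbb{A}^{n+1}\setminus0\to\mathbb{A}^{n+2}\setminus0$ is the identity on the $\mathbb{Z}$-summand. On the reduced part it is a map $\mathbb{Z}(n+1)[2n+1]\to\mathbb{Z}(n+2)[2n+3]$. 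I would argue that this map is zero: it is induced by the inclusion of $S^{2n+1,n+1}$ into $S^{2n+3,n+2}$, which is null because it factors through the contractible $\mathbb{A}^{n+2}\setminus\{x_{n+2}=0\}\cong\mathbb{A}^{n+1}\times\mathbb{G}_m$-type cover. More robustly, the map lies in $Hom(\mathbb{Z}(n+1)[2n+1],\mathbb{Z}(n+2)[2n+3])=H^{2,1}_{MW}(k)=0$. The vanishing $\widetilde{CH}^1(\mathrm{Spec}\,k)=0$ holds for dimension reasons.

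Finally, pass to the colimit. The colimit of $\mathbb{Z}\oplus\mathbb{Z}(n+1)[2n+1]$ with transition maps $\mathrm{id}\oplus 0$ is $\mathbb{Z}$, since the homotopy colimit of a sequence of zero maps vanishes. To make this rigorous, I would test against compact generators $\mathbb{Z}(Y)(i)[j]$, as in Proposition \ref{appro}. The groups $Hom(\mathbb{Z}(Y)(i)[j],\mathbb{Z}(n+1)[2n+1])$ form a sequence with zero transition maps, so their colimit vanishes, and compact generation concludes the argument. The main obstacle is the first step: making precise that the identification $O(1)^{\times}\cong\mathbb{A}^{n+1}\setminus0$ is compatible with the transition maps, so that the colimit is computed correctly. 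The Hom-vanishing argument then handles the transition maps without any geometric input.
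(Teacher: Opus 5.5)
Your argument is correct, but it takes a different route from the paper.

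\textbf{The paper's route.} It works directly on $\mathbb{P}^{\infty}$. It writes the Gysin triangle for the zero section of $O(1)$,
$\mathbb{Z}(O(1)^{\times})\to\mathbb{Z}(\mathbb{P}^{\infty})\to Th_{\mathbb{P}^{\infty}}(O(1))$.
It then asserts that, under the $\eta$-splittings of Proposition \ref{Pn}, the Gysin map is exactly $(c^{2i-1})$. That map is the projection of $\mathbb{Z}\oplus\bigoplus_i\mathbb{Z}/\eta(2i-1)[4i-2]$ onto its second summand, so its fiber is $\mathbb{Z}$.

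\textbf{Your route.} You argue stage by stage:
\begin{enumerate}
\item You identify each finite stage $O_{\mathbb{P}^n}(1)^{\times}\cong O_{\mathbb{P}^n}(-1)^{\times}\cong\mathbb{A}^{n+1}\setminus 0$. This is legitimate: the fiberwise inverse $v\mapsto v^{-1}$ is canonical, so it commutes with restriction along $\mathbb{P}^n\subseteq\mathbb{P}^{n+1}$.
\item You split off the reduced motive $\mathbb{Z}(n+1)[2n+1]$ using the compatible base point $(1,0,\dots,0)$.
\item You kill the reduced transition maps via $Hom(\mathbb{Z}(n+1)[2n+1],\mathbb{Z}(n+2)[2n+3])=\widetilde{CH}^1(k)=0$, using cancellation.
\item You conclude by compact generation.
\end{enumerate}

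\textbf{Comparison.} Your route is self-contained. It uses none of the $\mathbb{Z}/\eta$-splitting results behind Proposition \ref{Pn}, nor the identification of the Gysin map with $(c^{2i-1})$ that the paper states without further argument. In effect it just computes that $[\mathbb{G}_m/\mathbb{G}_m]$ is a point, using the approximations $U_n=\mathbb{A}^{n+1}\setminus 0$. The paper's route is a one-liner inside the framework it has already set up. It also records that the Euler-class map $\mathbb{Z}(\mathbb{P}^{\infty})\to Th_{\mathbb{P}^{\infty}}(O(1))$ is the split projection, which is the kind of identification reused in Theorems \ref{11} and \ref{12}.

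\textbf{A small correction.} Your first, geometric justification of the vanishing does not work as stated: it appeals to factoring through a ``contractible $\mathbb{A}^{n+1}\times\mathbb{G}_m$-type cover'', but $\mathbb{A}^{n+1}\times\mathbb{G}_m$ is not contractible. The correct geometric statement uses the $\mathbb{A}^1$-homotopy $(x,t)\mapsto(x,t)$ into $\mathbb{A}^{n+2}\setminus 0$. It connects $x\mapsto(x,0)$ to $x\mapsto(x,1)$, and the latter factors through $\{x_{n+2}=1\}\cong\mathbb{A}^{n+1}$. Your Hom-vanishing argument makes this point moot anyway.
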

\begin{proof}
We have the Gysin triangle
\[\mathbb{Z}(O_{\mathbb{P}^{\infty}}(1)^{\times})\to\mathbb{Z}(\mathbb{P}^{\infty})\xrightarrow{c} Th_{\mathbb{P}^{\infty}}(O(1))\to\cdots[1]\]
where the \(c\) is exactly the map \((c^{2i-1})\). So we conclude.
\end{proof}

Next we compute the MW-motive of \(\overline{\mathcal{M}}_{1,1}\).

Given \(X\in Sm/k\) and \(f,g:\mathbb{Z}(X)\to C\) for \(C\in\widetilde{DM}(k)\), define \(f\boxtimes g:\mathbb{Z}(X)\to C\) by the composite
\[\mathbb{Z}(X)\xrightarrow{\triangle}\mathbb{Z}(X)\otimes\mathbb{Z}(X)\xrightarrow{f\otimes g}C.\]

Denote by \(V_{n_1,\cdots,n_s}=\mathbb{A}^s,n_i\in\mathbb{Z}\) with the \(\mathbb{G}_m\)-action \(t\cdot(x_1,\cdots,x_s)=(t^{n_1}x_1,\cdots,t^{n_s}x_s)\). Similarly we can define \(\mathbb{P}^{s-1}_{n_1,\cdots,n_s}\).  It is well known that:
\[\overline{\mathcal{M}}_{1,1}=[V_{4,6}^{\times}/\mathbb{G}_m]\]
as stacks.  See \cite[Proposition 4.2.3]{LM} for example.  Its equivariant approximation is \((O(4)\oplus O(6))^{\times}\) on \(\mathbb{P}^{\infty}\).

\begin{theorem}\label{11}
\begin{enumerate}
\item We have
\[\mathbb{Z}(\overline{\mathcal{M}}_{1,1})=\mathbb{Z}\oplus C(24\partial)(1)[1]\oplus\bigoplus_{i=1}^{\infty}C(24\mathrm{Id}_{\mathbb{Z}/\eta})(2i+1)[4i+1]\]
in \(\widetilde{DM}(k)\).
\item We have
\[Th_{\overline{\mathcal{M}}_{1,1}}(O(-1))=\mathbb{Z}/\eta(1)[2]\oplus\bigoplus_{i=1}^{\infty}C(24\mathrm{Id}_{\mathbb{Z}/\eta})(2i+1)[4i+1]\]
in \(\widetilde{DM}(k)\).
\end{enumerate}
\end{theorem}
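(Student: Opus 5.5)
We compute both motives through the approximation $E=O(4)\oplus O(6)$ over $\mathbb{P}^\infty$, so that $\overline{\mathcal{M}}_{1,1}$ is modelled by $E^{\times}$. The first step is the Gysin triangle for the zero section of $E$:
\[
\mathbb{Z}(E^{\times})\to\mathbb{Z}(\mathbb{P}^{\infty})\xrightarrow{\ e\ }Th_{\mathbb{P}^\infty}(E)\to\mathbb{Z}(E^{\times})[1],
\]
where $Th(E)\cong Th(\det E)(1)[2]=Th(O(10))(1)[2]$. Since $O(10)\equiv O(0)$ in $Pic/2$, we have $Th(O(10))\cong\mathbb{Z}(\mathbb{P}^\infty)(1)[2]$, so $Th(E)\cong\mathbb{Z}(\mathbb{P}^\infty)(2)[4]$. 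Proposition \ref{Pn} then decomposes both $\mathbb{Z}(\mathbb{P}^\infty)$ and $Th(E)$ into shifted copies of $\mathbb{Z}$ and $\mathbb{Z}/\eta(2i-1)[4i-2]$. Consequently $\mathbb{Z}(\overline{\mathcal{M}}_{1,1})$ is, up to a shift, the cone of an explicit matrix of maps between these summands.

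The second step is to identify the entries of this matrix. The map $e$ is cap product with the MW-Euler class of $E$. Its image in $CH^2(\mathbb{P}^\infty)$ is $c_2(E)=24c_1(O(1))^2$, and by Remark \ref{e1} the components into $\mathbb{Z}/\eta$-summands are detected by Chow groups, because $CH^*(\mathbb{P}^\infty)$ is $2$-torsion free. In these terms $e$ should send the summand $\mathbb{Z}/\eta(2i-1)[4i-2]$ of the source to the summand $\mathbb{Z}/\eta(2i+1)[4i+2]$ of the target by $24\,\mathrm{Id}$. It should also send $\mathbb{Z}$ to the lowest target summand $\mathbb{Z}/\eta(1)[2]\otimes(2)[4]$ (that is, $c^1$ twisted by $(2)[4]$) via $24$ times the unit $\mathbb{Z}\to\mathbb{Z}/\eta$. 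To make this rigorous I would compute
\[
Hom(\mathbb{Z}/\eta(a)[2a],\mathbb{Z}/\eta(b)[2b])\quad\text{and}\quad Hom(\mathbb{Z},\mathbb{Z}/\eta(b)[2b])
\]
using the long exact sequence for $E_1$ and the vanishing of MW-motivic cohomology of the point in the relevant degrees. This should show that only the diagonal terms, and terms with shift $b-a=2$, can be nonzero, so the matrix is bidiagonal with entries $24$.

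The third step assembles the cones. Because the matrix is bidiagonal, a change of basis (an upper-triangular automorphism built from the $\mathbb{Z}$-summand) splits the cone into pieces:
\begin{itemize}
\item the free $\mathbb{Z}$ of degree $0$ survives as a summand;
\item the first target summand $\mathbb{Z}/\eta(2)[4]$ pairs with the $\mathbb{Z}$-component. Since $\partial$ appears naturally after rotating the triangle (the map $\mathbb{Z}\to\mathbb{Z}/\eta$ composed with the connecting morphism), this piece produces $C(24\partial)(1)[1]$;
\item each remaining pair produces $C(24\mathrm{Id}_{\mathbb{Z}/\eta})(2i+1)[4i+1]$ after the shift by $[-1]$.
\end{itemize}

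Part (2) follows the same pattern. We use the Gysin triangle for $O(-1)^{\times}$ over $E^{\times}$, or equivalently $Th(O(-1)\oplus E)$ over $\mathbb{P}^\infty$ relative to the complement, together with the second half of Proposition \ref{Pn}, since $O(-1)\equiv O(1)$ in $Pic/2$. The Thom space has no $\mathbb{Z}$-summand, so the leftover lowest piece is just $\mathbb{Z}/\eta(1)[2]$.

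The main obstacle is the second step. We must show that $e$ really has these diagonal components with coefficient exactly $24$ and no other components, and that the cones split as claimed. I would control this through the Cartesian square of Remark \ref{e1}, which identifies maps into $\mathbb{Z}/\eta$ with pairs of Chow classes. The off-diagonal Hom groups between twisted $\mathbb{Z}/\eta$'s I would check directly via $\eta$-long exact sequences over $k$. If non-diagonal terms turn out to be nonzero, I would kill them by automorphisms of the summands before taking cones.
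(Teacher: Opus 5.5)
Your overall route is the paper's: the Gysin triangle for $E^{\times}\subset E$, the decompositions of Proposition \ref{Pn} on source and target, Remark \ref{e1} together with $c_2(E)=24c_1(O(1))^2$ to read off the components, and then cones. However, step 2, which you yourself flag as the crux, describes the matrix of $e$ incorrectly. Step 3 is then not derived from that matrix but reverse-engineered from the answer.

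Concretely, the target is $\mathbb{Z}(\mathbb{P}^\infty)(2)[4]=\mathbb{Z}(2)[4]\oplus\bigoplus_{i\ge1}\mathbb{Z}/\eta(2i+1)[4i+2]$, and both components you propose lie in zero groups:
\begin{itemize}
\item $Hom(\mathbb{Z},\mathbb{Z}/\eta(3)[6])=E_1^{6,3}(k)=0$.
\item $Hom(\mathbb{Z}/\eta(2i-1)[4i-2],\mathbb{Z}/\eta(2i+1)[4i+2])\cong Hom(\mathbb{Z}/\eta,\mathbb{Z}/\eta(2)[4])=0$, because the cofiber sequence of $\eta$ sandwiches it between $E_1^{2,1}(k)=0$ and $E_1^{4,2}(k)=0$.
\end{itemize}
Also, "$24\,\mathrm{Id}$" between objects of different twist is not meaningful. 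So the "shift $b-a=2$" terms vanish rather than being the only nonzero ones. The target summand $\mathbb{Z}(2)[4]$ never receives anything in your matrix. Carried out literally, your matrix would leave a $\mathbb{Z}(2)[3]$ summand in $\mathbb{Z}(\overline{\mathcal M}_{1,1})$ and produce no $C(24\partial)$.

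Step 3 is also internally inconsistent. The $\mathbb{Z}$-summand cannot both map nontrivially and split off. Moreover, $C(24\partial)$ cannot come from composing the unit $\mathbb{Z}\to\mathbb{Z}/\eta$ with $\partial$: that composite is zero, since these are consecutive maps of the defining triangle of $\mathbb{Z}/\eta$. Note also that $C(24\partial)$ needs a $\mathbb{Z}/\eta$ in the source and a $\mathbb{Z}$ in the target, which is the reverse of your pairing.

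The correct matrix is as follows:
\begin{itemize}
\item The source $\mathbb{Z}$ maps to $0$.
\item The source $\mathbb{Z}/\eta(1)[2]$ maps to the target $\mathbb{Z}(2)[4]$ by $24\partial(1)[2]$.
\item For $i\ge1$, the source $\mathbb{Z}/\eta(2i+1)[4i+2]$ maps by $24\,\mathrm{Id}$ to the target summand of the \emph{same} twist. This is the summand cut out by $c^{2i-1}(2)[4]$, so the index drops by one.
\end{itemize}
The paper obtains this by writing $e=\mathrm{Id}_{\mathbb{P}^\infty}\boxtimes(24\partial\circ c^1)$ and computing the composites with the projections of the target decomposition. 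First, $c^{2i-1}(2)[4]\circ e=24c^{2i+1}$, read off from the Chow pair $(24c_1^{2i+1},24c_1^{2i+2})$ via Remark \ref{e1}. Second, the projection to $\mathbb{Z}(2)[4]$ composed with $e$ is $24\partial(1)[2]\circ c^1$. Since maps into the sum are determined by these composites, no off-diagonal terms arise and no change of basis is needed.

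The same index shift is what makes part (2) work. The lowest summand $\mathbb{Z}/\eta(1)[2]$ of $Th_{\mathbb{P}^\infty}(O(-1))$ survives because $Th_{\mathbb{P}^\infty}(O(-1))(2)[4]$ starts in twist $3$, and $Hom(\mathbb{Z}/\eta(1)[2],\mathbb{Z}/\eta(2i+1)[4i+2])=0$ for $i\ge1$. It does not survive "because there is no $\mathbb{Z}$-summand". Under your indexing, every source summand would pair off and nothing would be left over.
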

\begin{proof}
\begin{enumerate}
\item We have a Gysin triangle
\begin{equation}\label{gysin}\mathbb{Z}((O(4)\oplus O(6))^{\times})\to\mathbb{Z}(\mathbb{P}^{\infty})\xrightarrow{e(O(4)\oplus O(6))}\mathbb{Z}(\mathbb{P}^{\infty})(2)[4]\to\cdots[1].\end{equation}

The composition
\[\mathbb{Z}(\mathbb{P}^{\infty})\xrightarrow{\mathrm{Id}_{\mathbb{P}^{\infty}}\boxtimes(24\partial\circ c^1)}\mathbb{Z}(\mathbb{P}^{\infty})(2)[4]\xrightarrow{c^{2i-1}(2)[4]}\mathbb{Z}/\eta(2i+1)[4i+2]\]
is equal \(24c^{2i+1}\) by Remark \ref{e1} and Proposition \ref{Pn}, whereas we have a commutative diagram
\[
	\xymatrix
	{
		\mathbb{Z}(\mathbb{P}^{\infty})\ar[r]^-{\mathrm{Id}\boxtimes 24\partial\circ c^1}\ar[d]_{24c^1}	&\mathbb{Z}(\mathbb{P}^{\infty})(2)[4]\ar[d]\\
		\mathbb{Z}/\eta(1)[2]\ar[r]^-{\partial(1)[2]}														&\mathbb{Z}(2)[4]
	}.
\]
So we have
\[e(O(4)\oplus O(6))=\mathrm{Id}_{\mathbb{P}^{\infty}}\boxtimes(24\partial\circ c^1).\]
So \((O(4)\oplus O(6))^{\times}[1]\) is the mapping cone of the map 
\[
	\xymatrixcolsep{0pc}
	\xymatrix
	{
		\mathbb{Z}\oplus	&\mathbb{Z}/\eta(1)[2]\oplus\ar[d]_{24\partial(1)[2]}	&\mathbb{Z}/\eta(3)[6]\oplus\ar[d]_{24\mathrm{Id}}	&\cdots\ar[d]_{24\mathrm{Id}}\\
									&\mathbb{Z}(2)[4]\oplus										&\mathbb{Z}/\eta(3)[6]\oplus						&\cdots
	},
\]
which concludes the proof.
\item We have a Gysin triangle
\[Th_{(O(4)\oplus O(6))^{\times}}(O(-1))\to Th_{\mathbb{P}^{\infty}}(O(-1))\xrightarrow{e(O(4)\oplus O(6))}Th_{\mathbb{P}^{\infty}}(O(-1))(2)[4]\to\cdots[1].\]
The \(Th_{(O(4)\oplus O(6))^{\times}}(O(-1))[1]\) is the mapping cone of the map (Remark \ref{e1}, Proposition \ref{Pn})
\[
	\xymatrixcolsep{0pc}
	\xymatrix
	{
		\mathbb{Z}/\eta(1)[2]\oplus		&\mathbb{Z}/\eta(3)[6]\oplus\ar[d]_{24\mathrm{Id}}	&\cdots\ar[d]_{24\mathrm{Id}}\\
													&\mathbb{Z}/\eta(3)[6]\oplus						&\cdots
	},
\]
which concludes the proof.
\end{enumerate}
\end{proof}

\section{Elaborating the Basic Blocks}\label{block}
Denote by \(GW(k)\) (resp. \(W(k)\)) the Grothendieck-Witt group the field \(k\) classifying nondegenerate (resp. anisotropic) quadratic forms over \(k\). The \(K_*^{M}(k)=\frac{T(k^{\times})}{<a\otimes(1-a)>},a\in k^{\times}\setminus1\) stands for Milnor K-theory of \(k\).

Given the decomposition of $\overline{\mathcal{M}}_{1,1}$ in Theorem \ref{11} into blocks, including $\mathbb{Z}$, $\mathbb{Z}/\eta$, $C(24\partial)$, and $C(24\mathrm{Id}_{\mathbb{Z}/\eta})$, we must determine their respective MW-motivic cohomologies. Notably, as demonstrated in \cite[Proposition 5.8]{Y1}, the motive $\mathbb{Z}/\eta$ possesses a strong dual: $\mathbb{Z}/\eta(-1)[-2]$.
\begin{lemma}\label{-1}
\begin{enumerate}
\item We have
\[Hom(\mathbb{Z},\mathbb{Z}/\eta(n)[2n-1])=\begin{cases}K_1^M(k)&n=1\\2K_1^M(k)&n=0\\0&n\neq0,1\end{cases}.\]
\item If \(n\) is even, we have
\[Hom(\mathbb{Z}/\eta,\mathbb{Z}/\eta(n)[2n-1])=\begin{cases}0&n\neq0,2\\K_1^M(k)&n=2\\2K_1^M(k)&n=0\end{cases}.\]
\end{enumerate}
\end{lemma}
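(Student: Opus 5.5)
We apply the long exact sequence defining \(E_1^{*,*}\) to \(A=\mathbb{Z}\), and for part (2) we use the strong duality of \(\mathbb{Z}/\eta\) to reduce to part (1).

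\textbf{Part (1).} The group in question is \(E_1^{2n-1,n}(\mathbb{Z})\), and it sits in the exact sequence
\[H^{2n,n+1}_{MW}(k,\mathbb{Z})\xrightarrow{\eta}H^{2n-1,n}_{MW}(k,\mathbb{Z})\to E_1^{2n-1,n}(\mathbb{Z})\xrightarrow{\partial}H^{2n+1,n+1}_{MW}(k,\mathbb{Z})\xrightarrow{\eta}H^{2n,n}_{MW}(k,\mathbb{Z}).\]
For the MW-motivic cohomology of a field we use the standard facts: \(H^{p,q}_{MW}(k,\mathbb{Z})=0\) if \(p>q\); \(H^{n,n}_{MW}(k)=K^{MW}_n(k)\); and for \(q\le 0\) the groups \(H^{p,q}_{MW}\) with \(p\neq q\) vanish as well, since in negative weight only the \(W\)-part survives, concentrated on the diagonal.

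We now read off the sequence case by case.
\begin{itemize}
\item For \(n\ge 2\), both outer groups have \(p>q\), so they vanish and the middle group is \(0\).
\item For \(n=1\), the sequence becomes
\[H^{2,2}\xrightarrow{\eta}H^{1,1}\to E\to H^{3,2}=0.\]
The first map is \(\eta:K^{MW}_2\to K^{MW}_1\), whose cokernel is \(K^M_1(k)\). Hence \(E\cong K^M_1(k)\).
\item For \(n=0\), the sequence becomes
\[H^{0,1}=0\to H^{-1,0}=0\to E\to H^{1,1}=K^{MW}_1\xrightarrow{\eta}K^{MW}_0=GW(k).\]
So \(E=\ker(\eta)\) on \(K^{MW}_1\). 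Using the fibre product \(K^{MW}_1=K^M_1\times_{I^2/...}I\), or equivalently the relation \(\eta h=0\) together with the description of \(\ker\eta\) as the image of multiplication by \(h\), this kernel is \(h\cdot K^{MW}_1\cong 2K^M_1(k)\). The identification \(\ker(\eta|_{K^{MW}_1})=hK^{MW}_1\) comes from Morel's exact sequence \(0\to hK^{MW}_{n}\to K^{MW}_n\to W\)-part, and \(hK^{MW}_1\) maps isomorphically onto \(2K^M_1\).
\item For \(n<0\), all terms vanish, because \(H^{p,q}_{MW}\) with \(q<0\) and \(p\neq q\) is zero.
\end{itemize}

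\textbf{Part (2).} By strong duality \(\mathbb{Z}/\eta^\vee=\mathbb{Z}/\eta(-1)[-2]\) \cite[Proposition 5.8]{Y1}, we have
\[Hom(\mathbb{Z}/\eta,\mathbb{Z}/\eta(n)[2n-1])=Hom(\mathbb{Z},\mathbb{Z}/\eta\otimes\mathbb{Z}/\eta(n-1)[2n-3]).\]
The main step is to understand \(\mathbb{Z}/\eta\otimes\mathbb{Z}/\eta\). Tensoring the cofibre sequence of \(\eta\) with \(\mathbb{Z}/\eta\) gives a triangle
\[\mathbb{Z}/\eta(1)[1]\xrightarrow{\eta\otimes1}\mathbb{Z}/\eta\to\mathbb{Z}/\eta\otimes\mathbb{Z}/\eta\to\mathbb{Z}/\eta(1)[2].\]
The map \(\eta\otimes 1\) lies in \(Hom(\mathbb{Z}/\eta(1)[1],\mathbb{Z}/\eta)\), which by duality is a group computed from part (1)-type groups. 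I expect it to vanish, or to be killed by the parity hypothesis, for even \(n\); this is the main obstacle. In the topological analogue, \(\eta\) on \(C\eta\) is nonzero but \(\eta^2\)-related, so care is needed.

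If \(\eta\otimes 1=0\) in the relevant degrees, then
\[\mathbb{Z}/\eta\otimes\mathbb{Z}/\eta\cong\mathbb{Z}/\eta\oplus\mathbb{Z}/\eta(1)[2].\]
The target group then becomes
\[E_1^{2n-3,n-1}(\mathbb{Z})\oplus E_1^{2n-1,n}(\mathbb{Z}),\]
and part (1) gives the answer: \(K^M_1\) at \(n=2\) (from the first summand), \(2K^M_1\) at \(n=0\)? Here one must check the bookkeeping carefully: at \(n=0\) the first summand gives \(E^{-3,-1}=0\) and the second gives \(2K^M_1\); at \(n=1\) the two summands give \(2K^M_1\) and \(K^M_1\), which explains why odd \(n\) is excluded. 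For \(n=2\) the summands give \(K^M_1\) and \(0\), and all other even \(n\) give \(0\).

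If \(\eta\otimes1\neq0\), we instead use the long exact sequence of the triangle directly. For even \(n\), the connecting maps in that sequence are multiplication by \(\eta\) between groups computed in part (1), and these vanish for degree reasons except possibly at \(n=0,2\). There one checks by hand that the answers above persist.
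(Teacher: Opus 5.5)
\textbf{Part (1).} This part is correct, and it takes a different route from the paper. The paper dualizes to $\mathrm{Hom}(\mathbb{Z}/\eta,\mathbb{Z}(n+1)[2n+1])$ and quotes [Y2, Prop.~2.2]. You instead read the group off the $\eta$-long exact sequence using only $H^{*,*}_{MW}(k,\mathbb{Z})$, which is more self-contained.

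Your justification that the kernel of $\eta\colon K^{MW}_1(k)\to GW(k)$ is $2K^M_1(k)$ is garbled as written. The clean form uses $K^{MW}_1(k)=I(k)\times_{I(k)/I(k)^2}K^M_1(k)$ with $\eta(b,a)=(b,0)$. So the kernel is $\{(0,a)\}$ with $a\in\ker(K^M_1(k)\to I(k)/I(k)^2)=2K^M_1(k)$.

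\textbf{Part (2) has a genuine gap.} Everything beyond part (1) rests on $\eta\otimes 1_{\mathbb{Z}/\eta}=0$, i.e. on $\mathbb{Z}/\eta\otimes\mathbb{Z}/\eta\cong\mathbb{Z}/\eta\oplus\mathbb{Z}/\eta(1)[2]$, and you only say you expect it. Your fallback does not replace it. Write $X$ for the group in question.
\begin{itemize}
\item At $n=2$ the sequence is $E_1^{2,2}(\mathbb{Z})\cong K^M_2(k)\to E_1^{1,1}(\mathbb{Z})\cong K^M_1(k)\to X\to E_1^{3,2}(\mathbb{Z})=0$.
\item At $n=0$ it is $0\to X\to E_1^{-1,0}(\mathbb{Z})\cong 2K^M_1(k)\to E_1^{-2,-1}(\mathbb{Z})\cong\mathbb{Z}\cdot h$.
\end{itemize}
The maps here are induced by $\eta\otimes 1$, and no degree argument kills them. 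Yet these are exactly the two cases where the answer is nonzero. Parity also cannot help: $\eta\otimes 1$ is a single morphism independent of $n$, so the triangle splits for all $n$ or for none.

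\textbf{How to close it.} The missing step is short and uses only your own part-(1) technique. Apply $\mathrm{Hom}(-,\mathbb{Z}/\eta)$ to the triangle $\mathbb{Z}(2)[2]\xrightarrow{\eta}\mathbb{Z}(1)[1]\to\mathbb{Z}/\eta(1)[1]\to\mathbb{Z}(2)[3]$. This places $\mathrm{Hom}(\mathbb{Z}/\eta(1)[1],\mathbb{Z}/\eta)$ between $E_1^{-3,-2}(\mathbb{Z})$ and $E_1^{-1,-1}(\mathbb{Z})$.
\begin{itemize}
\item $E_1^{-1,-1}(\mathbb{Z})$ is the cokernel of the surjection $\eta\colon GW(k)\to W(k)$, since the next term is $H^{1,0}_{MW}(k,\mathbb{Z})=0$.
\item $E_1^{-3,-2}(\mathbb{Z})$ sits between $H^{-3,-2}_{MW}(k,\mathbb{Z})=0$ and the kernel of the isomorphism $\eta\colon W(k)\to W(k)$.
\end{itemize}
Both vanish, so $\eta\otimes 1_{\mathbb{Z}/\eta}=0$ and the triangle splits. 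Your bookkeeping $X\cong E_1^{2n-3,n-1}(\mathbb{Z})\oplus E_1^{2n-1,n}(\mathbb{Z})$ then yields the stated values.

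This is the splitting the paper obtains from duality and [Y1, Prop.~5.4]. Your first summand dualizes to $\mathrm{Hom}(\mathbb{Z}/\eta,\mathbb{Z}(n)[2n-1])$, and it is the right one. The paper's display has $\mathbb{Z}(n-1)[2n-1]$ there, which would give $0$ rather than $K^M_1(k)$ at $n=2$, so it appears to be a typo.
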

\begin{proof}
\begin{enumerate}
\item We have
\[Hom(\mathbb{Z},\mathbb{Z}/\eta(n)[2n-1])=Hom(\mathbb{Z}/\eta,\mathbb{Z}(n+1)[2n+1])\]
by duality. Apply \cite[Proposition 2.2]{Y2}.
\item We have
\[Hom(\mathbb{Z}/\eta,\mathbb{Z}/\eta(n)[2n-1])=Hom(\mathbb{Z}/\eta,\mathbb{Z}(n-1)[2n-1])\oplus Hom(\mathbb{Z}/\eta,\mathbb{Z}(n+1)[2n+1])\]
by duality and \cite[Proposition 5.4]{Y1}. Apply \cite[Proposition 2.2]{Y2}.
\end{enumerate}
\end{proof}
\begin{proposition}\label{ortho}
Define
\[\begin{array}{cc}A_l=Hom_{\widetilde{DM}}(C(24\partial)(1)[1],\mathbb{Z}(l+1)[2l+2])&B_l=Hom_{\widetilde{DM}}(C(24Id_{\mathbb{Z}/\eta})(1)[1],\mathbb{Z}(l+1)[2l+2]).\end{array}\]
We have
\[\begin{array}{cc}A_l=\begin{cases}0&l<0\textrm{ or }l>1\\\mathbb{Z}/24\mathbb{Z}&l=1\\2\mathbb{Z}\oplus{W}(k)&l=0\end{cases}&B_l=\begin{cases}0&l>1\textrm{ or }l<0\\\mathbb{Z}/24\mathbb{Z}&l=1\\2\mathbb{Z}/48\mathbb{Z}&l=0\end{cases}.\end{array}\]
\end{proposition}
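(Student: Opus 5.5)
Both groups are Hom-groups out of a mapping cone, so the plan is to apply $Hom(-,\mathbb{Z}(l+1)[2l+2])$ to the defining triangles and compute each term separately.

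\textbf{Setting up $A_l$.} The cone $C(24\partial)$ sits in a triangle
\[\mathbb{Z}/\eta\xrightarrow{24\partial}\mathbb{Z}(1)[2]\to C(24\partial)\to\mathbb{Z}/\eta[1].\]
Twisting by $(1)[1]$ and applying $Hom(-,\mathbb{Z}(l+1)[2l+2])$ gives the exact sequence
\[Hom(\mathbb{Z}/\eta,\mathbb{Z}(l)[2l])\xrightarrow{24\partial^*}Hom(\mathbb{Z},\mathbb{Z}(l-1)[2l-2])\to A_l\to Hom(\mathbb{Z}/\eta,\mathbb{Z}(l)[2l-1])\xrightarrow{24\partial^*}Hom(\mathbb{Z},\mathbb{Z}(l-1)[2l-3]).\]
Each term is computed as follows.
\begin{itemize}
\item The groups $Hom(\mathbb{Z},\mathbb{Z}(m)[n])=H^{n,m}_{MW}(k)$ are known. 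In bidegree $(2m,m)$ they equal $GW(k)$ for $m=0$ and vanish for $m\neq 0$; one uses that $\widetilde{CH}^m(\operatorname{Spec}k)=0$ for $m>0$ and that the Milnor--Witt groups vanish in negative twist with this shift.
\item The groups $Hom(\mathbb{Z}/\eta,\mathbb{Z}(m)[n])$ come from the $\eta$-long exact sequence. Equivalently, use \cite[Proposition 2.2]{Y2} through the duality $\mathbb{Z}/\eta^\vee=\mathbb{Z}/\eta(-1)[-2]$, as in Lemma \ref{-1}.
\end{itemize}
The bidegrees that contribute are the following.
\begin{itemize}
\item For $l=1$: the middle term is $Hom(\mathbb{Z},\mathbb{Z})=GW(k)$. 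It is hit by $24\partial^*$ from $Hom(\mathbb{Z}/\eta,\mathbb{Z}(1)[2])$. Remark \ref{e1} suggests the latter is $\mathbb{Z}$, generated by $\partial$ itself. The image of $\partial^*$ in $GW(k)$ should be $h\mathbb{Z}$, since $\partial\circ$ (the generator) corresponds to the hyperbolic form. So the cokernel is $GW(k)/24h$. Its rank part modulo the Witt part must then be cut down by the right-hand kernel, and one checks the final answer is $\mathbb{Z}/24$.
\item For $l=0$: the right-hand term $Hom(\mathbb{Z}/\eta,\mathbb{Z}[-1])$ and the left-hand terms should produce the kernel $2\mathbb{Z}\oplus W(k)$. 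This mirrors the computation in Lemma \ref{-1}.
\item For all other $l$: every term vanishes for degree reasons, so $A_l=0$.
\end{itemize}
I expect the same pattern from Lemma \ref{-1}, namely a $2\mathbb{Z}$ appearing alongside $K_1^M$, to recur here.

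\textbf{Setting up $B_l$.} Use the triangle
\[\mathbb{Z}/\eta\xrightarrow{24}\mathbb{Z}/\eta\to C\to\mathbb{Z}/\eta[1].\]
The resulting sequence is
\[Hom(\mathbb{Z}/\eta,\mathbb{Z}(l)[2l])\xrightarrow{24}Hom(\mathbb{Z}/\eta,\mathbb{Z}(l)[2l])\to B_l\to Hom(\mathbb{Z}/\eta,\mathbb{Z}(l)[2l-1])\xrightarrow{24}Hom(\mathbb{Z}/\eta,\mathbb{Z}(l)[2l-1]).\]
This gives a short exact sequence
\[0\to\operatorname{coker}(24)\to B_l\to\ker(24)\to 0.\]
The cases are:
\begin{itemize}
\item For $l=1$: $Hom(\mathbb{Z}/\eta,\mathbb{Z}(1)[2])\cong\mathbb{Z}$, by Remark \ref{e1} applied to a point: $\widetilde{CH}^1$ of the Thom-type object is $\ker$ of $CH^1$ of $\mathbb{P}^1$-type data. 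The groups in degree $(1,1)$ are torsion-free or $K_1^M$-type with no $24$-torsion contributing. This gives $\mathbb{Z}/24$.
\item For $l=0$: $Hom(\mathbb{Z}/\eta,\mathbb{Z})$ should be $2\mathbb{Z}$, which is the rank part of forms killed by $\eta$, consistent with Lemma \ref{-1}. Then $\operatorname{coker}(24)=2\mathbb{Z}/48\mathbb{Z}$. The kernel term $Hom(\mathbb{Z}/\eta,\mathbb{Z}[-1])$ is torsion-free, or zero, so $\ker(24)=0$ and $B_0=2\mathbb{Z}/48\mathbb{Z}$.
\end{itemize}

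\textbf{Expected obstacle.} The main difficulty is pinning down the groups $Hom(\mathbb{Z}/\eta,\mathbb{Z}(m)[n])$ precisely, including the $W(k)$ versus $2\mathbb{Z}$ pieces. Beyond that, one has to identify the map $\partial^*$ in the $A$-sequence, that is, compute $\partial$ composed with the generator. To do this I would first write $Hom(\mathbb{Z}/\eta,-)$ via the cofiber sequence of $\eta$, reducing everything to $K^{MW}_*(k)$. In that description $\eta$ acts by multiplication, and $h\eta=0$ determines the images.

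Finally, for $A_0$ one must check that the extension splits as $2\mathbb{Z}\oplus W(k)$ rather than being nontrivial. The splitting should follow from the fact that $2\mathbb{Z}$ is free.
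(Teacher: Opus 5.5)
\textbf{Verdict: there is a genuine gap.} Your $A_l$ sequence is set up backwards, and the $A_0$ splitting argument is wrong.

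\textbf{The $A_l$ sequence.} Applying $Hom(-,\mathbb{Z}(l+1)[2l+2])$ to the triangle $\mathbb{Z}/\eta(1)[1]\xrightarrow{24\partial}\mathbb{Z}(2)[3]\to C(24\partial)(1)[1]\to\mathbb{Z}/\eta(1)[2]$ gives
\[H^{2l-2,l-1}_{MW}(k,\mathbb{Z})\xrightarrow{24\partial^*}H^{2l,l}_{MW}(\mathbb{Z}/\eta,\mathbb{Z})\to A_l\to H^{2l-1,l-1}_{MW}(k,\mathbb{Z})\xrightarrow{24\partial^*}H^{2l+1,l}_{MW}(\mathbb{Z}/\eta,\mathbb{Z}).\]
So precomposition with $\partial$ runs from the $\mathbb{Z}$-terms to the $\mathbb{Z}/\eta$-terms, and it is the $\mathbb{Z}/\eta$-term that maps into $A_l$. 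In your version the roles are swapped and the shifts are off.

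This is why your $l=1$ paragraph cannot close. A cokernel $GW(k)/24h$ that injects into $A_1$ can never be ``cut down'' by the next term to give $\mathbb{Z}/24$. Also, not every term of your sequence vanishes for $l=2$: its term $Hom(\mathbb{Z}/\eta,\mathbb{Z}(2)[3])\cong K_1^M(k)$ is nonzero.

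In the correct sequence, for $l=1$ one gets $GW(k)\xrightarrow{24\,\mathrm{rk}}\mathbb{Z}\to A_1\to H^{1,0}_{MW}(k,\mathbb{Z})=0$. This uses $Hom(\mathbb{Z}/\eta,\mathbb{Z}(1)[2])=GW(k)/I(k)$, with $\partial^*$ the rank. For $l\notin\{0,1\}$ the second and fourth terms vanish.

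\textbf{The same slip in $B_l$.} The right-hand term should be $Hom(\mathbb{Z}/\eta,\mathbb{Z}(l)[2l+1])$, which is $0$, not $Hom(\mathbb{Z}/\eta,\mathbb{Z}(l)[2l-1])$. For $l=1$ the latter is $2K_1^M(k)\cong k^{\times 2}$. This contains the nontrivial $2$-torsion element $-1$ as soon as $\sqrt{-1}\in k$, so your ``no $24$-torsion'' remark is unfounded. Otherwise your cokernel computations $B_1=\mathbb{Z}/24$ and $B_0=2\mathbb{Z}/48\mathbb{Z}$ agree with the paper's.

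\textbf{The gap at $A_0$.} The correct sequence gives
\[0\to Hom(\mathbb{Z}/\eta,\mathbb{Z})=\mathbb{Z}h\to A_0\to H^{-1,-1}_{MW}(k,\mathbb{Z})=W(k)\to Hom(\mathbb{Z}/\eta,\mathbb{Z}[1])=0,\]
so $A_0$ is an extension of $W(k)$ by $\mathbb{Z}h\cong 2\mathbb{Z}$. Freeness of the \emph{subgroup} does not split such an extension. You would need the quotient to be projective, and $W(k)$ usually has $2$-torsion.

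Your argument would prove something false. Replacing $24$ by $1$ gives $C(\partial)\cong\mathbb{Z}[1]$ and the same sequence with middle term $GW(k)$. For $k$ quadratically closed this is $0\to\mathbb{Z}\xrightarrow{2}\mathbb{Z}\to\mathbb{Z}/2\to 0$, which does not split, yet your reasoning would split it.

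\textbf{The missing idea}, which the paper uses, is to compare with exactly that case.
\begin{itemize}
\item The morphism of triangles given by $24\,\mathrm{Id}$ on $\mathbb{Z}/\eta$ and the identity on $\mathbb{Z}(1)[2]$ induces $C(24\partial)\to\mathbb{Z}[1]$.
\item Applying $Hom(-,\mathbb{Z}[1])$ exhibits $0\to\mathbb{Z}\xrightarrow{v}A_0\xrightarrow{p}W(k)\to 0$ as the pushout of the generally non-split extension $0\to\mathbb{Z}\xrightarrow{h}GW(k)\to W(k)\to 0$ along multiplication by $24$. Write $u\colon GW(k)\to A_0$ for the middle map.
\item Since $24$ is even, $y\mapsto u(y)-12\,v(\mathrm{rk}\,y)$ kills $h$. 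It therefore descends to a section of $p$, and this is what yields $A_0\cong 2\mathbb{Z}\oplus W(k)$.
\end{itemize}
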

\begin{proof}
We use \cite[Proposition 5.4, 5.6]{Y1}. We have a long exact sequence
\[H^{2l-2,l-1}_{MW}(k,\mathbb{Z})\xrightarrow{24\partial}H^{2l,l}_{MW}(\mathbb{Z}/\eta,\mathbb{Z})\to A_l\to H^{2l-1,l-1}_{MW}(k,\mathbb{Z})\xrightarrow{24\partial}H^{2l+1,l}_{MW}(\mathbb{Z}/\eta,\mathbb{Z}).\]
If \(l<0\) or \(l>1\), the second and the fourth term vanish. Hence \(A_l=0\). If \(l=1\), the fourth term is zero and the first arrow is \(24rk:GW(k)\to\mathbb{Z}\). So \(A_l=\mathbb{Z}/24\mathbb{Z}\). Suppose \(l=0\). We have a commutative diagram with rows being distinguished triangles
\[
	\xymatrix
	{
		\mathbb{Z}/\eta\ar[r]^{24\partial}\ar[d]_{24Id}	&\mathbb{Z}(1)[2]\ar[r]\ar@{=}[d]	&C(24\partial)\ar[r]\ar[d]			&\\
		\mathbb{Z}/\eta\ar[r]^{\partial}						&\mathbb{Z}(1)[2]\ar[r]					&\mathbb{Z}[1]\ar[r]	&
	},
\]
which induces the commutative diagram with exact rows by applying \(Hom(-,\mathbb{Z}[1])\)
\[
	\xymatrix
	{
		0\ar[r]	&\mathbb{Z}\ar[r]^-h\ar[d]_{24}	&GW(k)\ar[r]\ar[d]_u	&{W}(k)\ar[r]\ar@{=}[d]	&0\\
		0\ar[r]	&\mathbb{Z}\ar[r]^-v				&A_{0,0}\ar[r]^{p}					&{W}(k)\ar[r]					&0
	}.
\]
Now suppose \(x\in{W}(k)\) has a lift \(y\in{GW}(k)\). Then define \(\varphi(x)=u(y)-12v(rk(y))\). Hence \(\varphi(h)=0\) so \(\varphi\) is well defined over \({W}(k)\) and is a section of \(p\). Finally, the term \(2\mathbb{Z}\) comes from \(Hom(\mathbb{Z}/\eta,\mathbb{Z})=2\mathbb{Z}\).

Similarly, there is a long exact sequence
\[H^{2l,l}_{MW}(\mathbb{Z}/\eta,\mathbb{Z})\xrightarrow{24}H^{2l,l}_{MW}(\mathbb{Z}/\eta,\mathbb{Z})\to B_l\to H^{2l+1,l}_{MW}(\mathbb{Z}/\eta,\mathbb{Z})\xrightarrow{24}H^{2l+1,l}_{MW}(\mathbb{Z}/\eta,\mathbb{Z}).\]
The fourth term is zero and the second term is \(\begin{cases}\mathbb{Z}&l=1\\2\mathbb{Z}&l=0\\0&\textrm{else}\end{cases}\).
\end{proof}

We can recover the computation of \cite{LM}.
\begin{coro}\label{ring11}
We have
\[\begin{array}{cc}\widetilde{CH}^i(\overline{\mathcal{M}}_{1,1})=\begin{cases}GW(k)&i=0\\W(k)\oplus2\mathbb{Z}&i=1\\\mathbb{Z}/24\mathbb{Z}&i\geq 2,i\textrm{ even}\\2\mathbb{Z}/48\mathbb{Z}&i\geq 2,i\textrm{ odd}\end{cases}&\widetilde{CH}^i(\overline{\mathcal{M}}_{1,1},O(-1))=\begin{cases}2\mathbb{Z}&i=0\\\mathbb{Z}&i=1\\\mathbb{Z}/24\mathbb{Z}&i\geq 2,i\textrm{ odd}\\2\mathbb{Z}/48\mathbb{Z}&i\geq 2,i\textrm{ even}\end{cases}.\end{array}\]
\end{coro}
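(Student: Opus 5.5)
The plan is to read off both Chow--Witt groups from the decompositions of Theorem \ref{11}, using that \(\widetilde{CH}^i(X)=Hom(\mathbb{Z}(X),\mathbb{Z}(i)[2i])\). For the twisted groups we use the definition \(\widetilde{CH}^i(\overline{\mathcal{M}}_{1,1},O(-1))=Hom(Th(O(-1)),\mathbb{Z}(i+1)[2i+2])\). Since Hom out of a direct sum is a product, each group splits into contributions from the individual blocks. Each block contribution is then identified using Proposition \ref{ortho} and the known values \(Hom(\mathbb{Z}/\eta,\mathbb{Z}(m)[2m])\) from \cite[Proposition 5.4]{Y1} and \cite[Proposition 2.2]{Y2}. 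Concretely, these values are \(2\mathbb{Z}\) for \(m=0\), \(\mathbb{Z}\) for \(m=1\), and \(0\) otherwise; this agrees with the second term listed in the proof of Proposition \ref{ortho}.

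For the untwisted ring we proceed summand by summand.
\begin{enumerate}
\item The summand \(\mathbb{Z}\) contributes \(H^{2i,i}_{MW}(k)\), which is \(GW(k)\) for \(i=0\) and \(0\) for \(i\neq 0\). For \(i<0\) the groups vanish; for \(i>0\) this holds because \(H^{2i,i}_{MW}(k)\) vanishes for positive weight \(i\), as the weight exceeds the dimension of a point.
\item The summand \(C(24\partial)(1)[1]\) contributes \(Hom(C(24\partial)(1)[1],\mathbb{Z}(i)[2i])=A_{i-1}\). This gives \(2\mathbb{Z}\oplus W(k)\) for \(i=1\), \(\mathbb{Z}/24\) for \(i=2\), and \(0\) otherwise.
\item The summand \(C(24\mathrm{Id})(2j+1)[4j+1]\) is \(C(24\mathrm{Id})(1)[1]\) shifted by \((2j)[4j]\). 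Its contribution is therefore \(B_{i-1-2j}\), which is nonzero only for \(i=2j+1\) (giving \(2\mathbb{Z}/48\)) or \(i=2j+2\) (giving \(\mathbb{Z}/24\)).
\end{enumerate}
Summing over \(j\geq 1\) and adding the first three summands gives the stated answer. For \(i\geq 2\) even, exactly one \(\mathbb{Z}/24\) occurs: it comes from \(A_1\) when \(i=2\), and from \(B_1\) with \(j=(i-2)/2\) when \(i\geq 4\). For \(i\geq 3\) odd, the group is the single \(2\mathbb{Z}/48\) from \(B_0\) with \(j=(i-1)/2\). For \(i=1\) nothing from the \(B\)-summands appears, because they start at \(j=1\).

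For the twisted groups we compute \(Hom(-,\mathbb{Z}(i+1)[2i+2])\) on the decomposition in Theorem \ref{11}(2).
\begin{enumerate}
\item The summand \(\mathbb{Z}/\eta(1)[2]\) contributes \(Hom(\mathbb{Z}/\eta,\mathbb{Z}(i)[2i])\). This is \(2\mathbb{Z}\) for \(i=0\), \(\mathbb{Z}\) for \(i=1\), and \(0\) otherwise.
\item The summands \(C(24\mathrm{Id})(2j+1)[4j+1]\) contribute \(B_{i-2j}\). This is \(2\mathbb{Z}/48\) when \(i=2j\) and \(\mathbb{Z}/24\) when \(i=2j+1\), for \(j\geq 1\).
\end{enumerate}
This yields \(\mathbb{Z}/24\) in odd degrees \(i\geq 3\) and \(2\mathbb{Z}/48\) in even degrees \(i\geq 2\), as claimed. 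Nothing extra enters in degree \(i=1\), since \(j\geq 1\) forces \(i\geq 2\) for the \(B\)-summands.

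The only real point of care is the indexing: the shifts \((2j+1)[4j+1]\) must be matched correctly against the shift \((l+1)[2l+2]\) in the definitions of \(A_l\) and \(B_l\). The remaining inputs are the vanishing of \(H^{2i,i}_{MW}(k)\) for \(i\neq 0\) and the known values of \(Hom(\mathbb{Z}/\eta,\mathbb{Z}(m)[2m])\). Any Milnor--Witt contributions in negative weight are excluded because all the relevant groups are of Chow--Witt type, of bidegree \((2m,m)\).
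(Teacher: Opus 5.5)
Your proposal is correct and is exactly the argument the paper intends; the corollary is stated without proof right after Proposition \ref{ortho}. The argument applies $Hom(-,\mathbb{Z}(i)[2i])$, resp.\ $Hom(-,\mathbb{Z}(i+1)[2i+2])$, to the two decompositions of Theorem \ref{11} and reads off each block from $A_l$, $B_l$ and $Hom(\mathbb{Z}/\eta,\mathbb{Z}(l)[2l])$; your index matching ($A_{i-1}$, $B_{i-1-2j}$, $B_{i-2j}$) checks out. Your closing remark is inaccurate, though: the $W(k)$ in $\widetilde{CH}^1(\overline{\mathcal{M}}_{1,1})$ is precisely a negative-weight contribution, namely $H^{-1,-1}_{MW}(k)=W(k)$ entering through $A_0$, but since you take $A_0$ directly from Proposition \ref{ortho} this does not affect your argument.
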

\section{The MW-motive of \(\overline{\mathcal{M}}_{1,2}\)}\label{m12}
In this section, we will complete the computation of MW-motive of \(\overline{\mathcal{M}}_{1,2}\). By \cite[Corollary 2.6]{K}, \(\overline{\mathcal{M}}_{1,2}\) is the universal curve over \(\overline{\mathcal{M}}_{1,1}\). Then we define
\[Z\subseteq \mathbb{P}^2_{2,3,0}\times V_{4,6}^{\times}=\{((x:y:z),a,b)\}\]
to be the vanishing locus of \(y^2z-(x^3+axz^2+bz^3)\). Then
\begin{equation}\label{quo}\overline{\mathcal{M}}_{1,2}=[Z/\mathbb{G}_m].\end{equation}
See also \cite[Section 2]{Ma} for example.

\begin{lemma}\label{cone}
Let \(\mathscr{C}\) be a triangulated category with \(A,B,X\in\mathscr{C}\). Let \(f:A\to B\) and \(g:X\to X\). If the map \(Hom_{\mathscr{C}}(g,T)\) is bijective for \(T=A,B,A[1],B[1]\), it is bijective for \(T=C(f)\).
\end{lemma}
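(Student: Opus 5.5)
The argument is a five-lemma computation, using nothing but the long exact sequences that the functor \(Hom_{\mathscr{C}}(-,T)\) produces from a distinguished triangle. Write the triangle as
\[A\xrightarrow{f}B\to C(f)\to A[1].\]
Applying the contravariant cohomological functor \(Hom_{\mathscr{C}}(X,-)\), and naturality in the first variable, gives a map of long exact sequences
\[\cdots\to Hom(X,A)\to Hom(X,B)\to Hom(X,C(f))\to Hom(X,A[1])\to Hom(X,B[1])\to\cdots.\]
Each term maps to itself under precomposition with \(g\), that is, under \(Hom(g,-)\). The squares commute because precomposition with \(g\) commutes with postcomposition by the maps of the triangle, and hence also with the connecting morphism \(C(f)\to A[1]\).

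We then apply the five lemma to the segment
\[Hom(X,A)\to Hom(X,B)\to Hom(X,C(f))\to Hom(X,A[1])\to Hom(X,B[1]).\]
The vertical maps \(Hom(g,T)\) for \(T=A,B,A[1],B[1]\) are bijections by hypothesis. The five lemma then shows that the middle vertical map \(Hom(g,C(f))\) is a bijection.

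There is one bookkeeping point. The five lemma needs the outer maps only to be epi on the far left and mono on the far right, and our hypotheses give bijections there, so the argument goes through directly. The only thing to verify carefully is commutativity of the square involving the connecting map. This is immediate, since both composites equal \(\delta\circ u\circ g\), where \(\delta:C(f)\to A[1]\) and \(u:X\to C(f)\).

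I expect no real obstacle.
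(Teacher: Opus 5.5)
Your proof is correct and is essentially the paper's own argument: you apply \(Hom_{\mathscr{C}}(X,-)\) to the triangle \(A\to B\to C(f)\to A[1]\), take precomposition with \(g\) as the vertical maps (the squares commute by associativity of composition), and conclude with the five lemma. One small wording slip: \(Hom_{\mathscr{C}}(X,-)\) is a covariant homological functor, not a contravariant cohomological one, but this does not affect the argument.
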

\begin{proof}
Follows from the commutative diagram with exact rows and the Five Lemma
\[
	\xymatrix
	{
		Hom(X,A)\ar[r]^{f_*}\ar[d]_{g^*}	&Hom(X,B)\ar[r]\ar[d]_{g^*}	&Hom(X,C(f))\ar[r]\ar[d]_{g^*}	&Hom(X,A[1])\ar[r]^-{f[1]_*}\ar[d]_{g^*}	&Hom(X,B[1])\ar[d]_{g^*}\\
		Hom(X,A)\ar[r]^{f_*}						&Hom(X,B)\ar[r]						&Hom(X,C(f))\ar[r]						&Hom(X,A[1])\ar[r]^-{f[1]_*}				&Hom(X,B[1])\\
	}.
\]
\end{proof}
The decomposition in Theorem \ref{11} shows the following:
\begin{lemma}\label{al}
The \(\mathbb{Z}(\overline{\mathcal{M}}_{1,1})=\oplus_{i=1}^{\infty}A_i\) such that for each \(i\), we have
\[\#\{j|Hom(A_i,A_j)\neq0\}<\infty.\]
Hence we have
\[End(\oplus_{i=1}^{\infty}A_i)=\prod_{j=1}^{\infty}Hom(\oplus_{i=1}^{\infty}A_i,A_j).\]
\end{lemma}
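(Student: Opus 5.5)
We take the blocks of Theorem \ref{11}(1) as the summands: \(A_1=\mathbb{Z}\), \(A_2=C(24\partial)(1)[1]\), and \(A_{i+2}=C(24\mathrm{Id}_{\mathbb{Z}/\eta})(2i+1)[4i+1]\) for \(i\geq1\). The plan is to show that for each fixed \(i\), \(Hom(A_i,A_j)=0\) once \(j\) is large. The displayed formula for \(End\) then follows formally. The coproduct gives \(Hom(\oplus_i A_i,A_j)=\prod_i Hom(A_i,A_j)\). The finiteness statement means the canonical map \(\oplus_j A_j\to\prod_j A_j\) induces an isomorphism on \(Hom(A_i,-)\) for every \(i\), once we also use compactness of each \(A_i\). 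Each \(A_i\) is built by finitely many cones from \(\mathbb{Z}(n)[m]\) and \(\mathbb{Z}/\eta(n)[m]\), and these are compact because \(\mathbb{Z}/\eta\) is strongly dualizable. Hence \(Hom(\oplus_iA_i,\oplus_jA_j)=\prod_i\oplus_jHom(A_i,A_j)=\prod_j Hom(\oplus_iA_i,A_j)\), since each inner sum is finite.

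For the vanishing, we apply Lemma \ref{cone}-style five-lemma reasoning, or just long exact sequences, in both variables. Every \(A_i\) is an extension of shifts of the basic pieces \(\mathbb{Z}(n)[2n]\), \(\mathbb{Z}/\eta(n)[2n]\) and their \([\pm1]\)-shifts. The twist of these pieces grows linearly in \(i\), roughly weight \(2i+1\) and degree \(4i+1\), and stays close to the diagonal \(p=2q\). So it suffices to check that \(Hom(P(n)[m],Q(n')[m'])=0\) for basic pieces \(P,Q\in\{\mathbb{Z},\mathbb{Z}/\eta\}\) when \(n'-n\) is large and \(m'-m\) differs from \(2(n'-n)\) by a bounded amount. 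This amounts to computing \(H^{p,q}_{MW}\) of \(k\) and of \(\mathbb{Z}/\eta\), using duality \(\mathbb{Z}/\eta^\vee=\mathbb{Z}/\eta(-1)[-2]\) and \cite[Proposition 5.4]{Y1}. Over a field, \(H^{p,q}_{MW}(k)=0\) when \(p>q\). With \(q=N\) and \(p\approx 2N\), this vanishes for \(N\) large. This is exactly the pattern visible in Lemma \ref{-1} and Proposition \ref{ortho}, where the groups vanish for \(l>1\) or \(n\neq0,1,2\).

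We will organize the computation by fixing \(i\). We use the triangles defining the cones to reduce to at most four \(Hom\)-groups between shifted \(\mathbb{Z}\) or \(\mathbb{Z}/\eta\). For \(j\) beyond some bound depending on \(i\), each of these has target bidegree \((p,q)\) with \(q\) large and \(p-q\geq q-c>0\), so it vanishes. The main obstacle is the bookkeeping of the finitely many small cases involving the \(\mathbb{Z}(2)[4]\) term in \(C(24\partial)\). This term sits at slightly different twists, so one must confirm it also falls in the vanishing range. Since we only need eventual vanishing, crude degree bounds suffice, and we do not need exact groups.
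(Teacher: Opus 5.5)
Your proposal is correct and follows the paper's route. The paper gives no argument beyond saying that the decomposition of Theorem \ref{11} implies the lemma. You supply the omitted verification: compactness of the finitely built blocks, and the vanishing of $Hom(A_i,A_j)$ for $j\gg i$ from $H^{p,q}_{MW}(k)=0$ for $p>q$, which applies because the blocks sit near the diagonal $p=2q$ with weights growing in $j$.
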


The natural map \(p:\overline{\mathcal{M}}_{1,2}\to\overline{\mathcal{M}}_{1,1}\) has a canonical infinity section \(inf:\overline{\mathcal{M}}_{1,1}\to\overline{\mathcal{M}}_{1,2}\).
\begin{theorem}\label{12}
The composite
\[\mathbb{Z}(\overline{\mathcal{M}}_{1,2}\setminus inf)\to\mathbb{Z}(\overline{\mathcal{M}}_{1,2})\to\mathbb{Z}(\overline{\mathcal{M}}_{1,1})\]
is an isomorphism in \(\widetilde{DM}(k)\). We have
\[\mathbb{Z}(\overline{\mathcal{M}}_{1,2})=\mathbb{Z}(\overline{\mathcal{M}}_{1,1})\oplus Th_{\overline{\mathcal{M}}_{1,1}}(O(-1))\]
in \(\widetilde{DM}(k)\).
\end{theorem}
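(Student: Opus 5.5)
Use the Gysin triangle for the closed immersion $inf:\overline{\mathcal{M}}_{1,1}\to\overline{\mathcal{M}}_{1,2}$. Its complement $\overline{\mathcal{M}}_{1,2}\setminus inf$ is the affine part $z\neq 0$ of the Weierstrass family $Z$ in \eqref{quo}. The normal bundle of the infinity section is the tangent line of the curve at the origin, which is $O(-1)$ up to squares (equivalently the dual Hodge bundle); only its class in $Pic/2$ matters for the Thom space. We work at the level of the approximations $Z\times_{\mathbb{G}_m}U_n$ and pass to the colimit, which is legitimate by Proposition \ref{appro}. This gives a distinguished triangle
\[\mathbb{Z}(\overline{\mathcal{M}}_{1,2}\setminus inf)\xrightarrow{j}\mathbb{Z}(\overline{\mathcal{M}}_{1,2})\xrightarrow{\;}Th_{\overline{\mathcal{M}}_{1,1}}(O(-1))\to\cdots[1].\]
If the composite $p\circ j$ is an isomorphism, then $j$ is a split monomorphism. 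The triangle then splits, and we get $\mathbb{Z}(\overline{\mathcal{M}}_{1,2})\cong\mathbb{Z}(\overline{\mathcal{M}}_{1,1})\oplus Th_{\overline{\mathcal{M}}_{1,1}}(O(-1))$. So the whole theorem reduces to the first assertion.

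For the first assertion, write $g=p\circ j\circ s$ for a suitable comparison map. The space $\overline{\mathcal{M}}_{1,2}\setminus inf$ is the quotient by $\mathbb{G}_m$ of the affine surface $y^2=x^3+ax+b$ over $V_{4,6}^{\times}$. It is not an affine bundle, because the fibres are singular over the discriminant. So we cannot conclude by $\mathbb{A}^1$-invariance, and I would argue indirectly.

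\textbf{Step 1.} Compute the Chow groups and the $E_1$/Chow--Witt data of $\overline{\mathcal{M}}_{1,2}\setminus inf$ on the approximations. Use the localization sequence for the cuspidal or nodal locus, or alternatively the projection $(x,y,a,b)\mapsto(x,y,a)$. This projection is an isomorphism, since $b=y^2-x^3-ax$ is determined by the other coordinates. The total space is therefore the open subset of $V_{2,3,4}$ where $(a,y^2-x^3-ax)\neq 0$. This is the key observation: $\overline{\mathcal{M}}_{1,2}\setminus inf=[W/\mathbb{G}_m]$ with $W\subseteq\mathbb{A}^3_{2,3,4}$ open, and its complement is the curve $\{a=0,\ y^2=x^3\}$.

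\textbf{Step 2.} Compute $\mathbb{Z}([W/\mathbb{G}_m])$ by the same Gysin-triangle method as in Theorem \ref{11}. Compare it with $\mathbb{Z}(\overline{\mathcal{M}}_{1,1})=\mathbb{Z}([V_{4,6}^\times/\mathbb{G}_m])$ via $p\circ j$, which on $W$ is the map $(x,y,a)\mapsto(a,y^2-x^3-ax)$.

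\textbf{Step 3.} Check that the induced map on $Hom(-,T)$ is bijective for every block $T$ appearing in Theorem \ref{11}, namely $\mathbb{Z}$, $\mathbb{Z}/\eta$ and their twists and shifts. By Lemma \ref{cone} it is then bijective for the cones $C(24\partial)$ and $C(24\mathrm{Id}_{\mathbb{Z}/\eta})$, hence for all summands. Lemma \ref{al} lets us pass to the infinite sum, giving $End$ as a product. A Yoneda argument inside the endomorphism ring then shows that $p\circ j$ is an isomorphism. Concretely, I would construct an inverse candidate and verify that the composite endomorphism $g$ of $\mathbb{Z}(\overline{\mathcal{M}}_{1,1})$ induces bijections on Hom into each block, so that $g$ is invertible. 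Bijectivity for $\mathbb{Z}$ and $\mathbb{Z}/\eta$ targets reduces, via Remark \ref{e1}, to statements about Chow groups mod the relevant torsion. Those in turn reduce to the facts that $p\circ j$ pulls back the Hodge class to the Hodge class and that both sides have the same Chow rings $\mathbb{Z}[\lambda]/(24\lambda^2)$.

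The main obstacle is Step 3: showing that the Hom-bijections hold integrally and not just rationally. That means controlling the Gysin map for the removed curve $\{a=0,y^2=x^3\}\cong[\mathbb{A}^1\setminus0/\mathbb{G}_m]$ with weights $(2,3)$. In particular, one must check that its contribution exactly cancels the extra cohomology of the three-dimensional $V_{2,3,4}$ compared with $V_{4,6}$. I would first try to compute the Euler class explicitly, as in the proof of Theorem \ref{11}, and match the resulting $24$'s.
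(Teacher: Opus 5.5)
Your outer structure matches the paper's: the Gysin triangle for $inf$ with $N_{inf}=O(-1)$ (your identification as the dual Hodge bundle is fine), the reduction to showing $p\circ j$ is an isomorphism, the elimination of $b$ giving an open of $[V_{2,3,4}/\mathbb{G}_m]$, and the closing argument via Lemma \ref{cone}, Lemma \ref{al} and Yoneda. But Step 3 carries essentially all the content of the theorem, and the route you sketch for it does not work. Remark \ref{e1} needs $CH^*$ to be $2$-torsion free. That fails for both stacks, since $CH^{\geq 2}\cong\mathbb{Z}/24$. Applied anyway, it predicts the wrong groups. For odd $n\geq 3$ the paper's diagram gives $E_1^{2n,n}(\overline{\mathcal{M}}_{1,1})\cong L/24L\cong(\mathbb{Z}/24)^2$ with $L=\{(a,b)\in\mathbb{Z}^2: a\equiv b \bmod 2\}$, whereas the Cartesian square would give a group of order $288$. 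Moreover, Lemma \ref{cone} forces you to test against the objects $A,B$ of the cones themselves, i.e. against $\mathbb{Z}(2)[3]$ and $\mathbb{Z}/\eta(n)[2n-1]$. These Hom groups are made of $GW(k)$- and $K_1^M(k)$-pieces: Lemma \ref{-1} gives the $K_1^M(k)$ terms, and there is the sequence $0\to 2K_1^M(k)\to H^{3,2}_{MW}(\overline{\mathcal{M}}_{1,1},\mathbb{Z})\to GW(k)\to\mathbb{Z}$. So knowing that $\lambda\mapsto\lambda$ identifies the Chow rings $\mathbb{Z}[\lambda]/(24\lambda^2)$ says nothing about them.

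What is missing is a mechanism to transport information from $\mathbb{P}^\infty$, where Remark \ref{e1} does apply. The paper builds it as follows.
\begin{enumerate}
\item It first removes the origin. The cusp $\{a=0,y^2=x^3\}$ is singular there, so you cannot take its Gysin triangle directly. This gives $\mathbb{Z}([W_1/\mathbb{G}_m])=\mathbb{Z}((O(2)\oplus O(3)\oplus O(4))^\times)$, computed as in Theorem \ref{11}.
\item It then removes the smooth punctured cusp $\cong O(1)^\times$, whose motive is $\mathbb{Z}$ by Lemma \ref{o1}. The Gysin map $GW(k)\to\widetilde{CH}^2([W_1/\mathbb{G}_m])=\mathbb{Z}$ is $24$, being the class of $\{f^3=g^2,h=0\}$. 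This produces the abstract isomorphism $\mathbb{Z}(\overline{\mathcal{M}}_{1,2}\setminus inf)\cong\mathbb{Z}(\overline{\mathcal{M}}_{1,1})$ that your Yoneda step presupposes.
\item The decisive point: $(x,y,a)\mapsto(a,y^2-x^3-ax)$ extends to $[W_1/\mathbb{G}_m]\to[V_{4,6}/\mathbb{G}_m]\simeq\mathbb{P}^\infty$ and is a submersion along the punctured cusp. This gives a morphism of Gysin triangles from $[W_2/\mathbb{G}_m]\to[W_1/\mathbb{G}_m]\to\mathbb{Z}(2)[4]$ to $\overline{\mathcal{M}}_{1,1}\to\mathbb{P}^\infty\xrightarrow{e(O(4)\oplus O(6))}\mathbb{P}^\infty(2)[4]$.
\item For each of the five target types one runs a five/snake-lemma chase on this diagram, applying Remark \ref{e1} only to $\mathbb{P}^\infty$ and $k$. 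For example, $\ker a^*=\mathrm{Im}\,e(O(2)\oplus O(3)\oplus O(4))=\mathrm{Im}\,e(O(4)\oplus O(6))$ inside $E_1^{2n,n}(\mathbb{P}^\infty)$ for $n\geq 3$. In the $[2n-1]$ case, Lemma \ref{-1} shows the $K_1^M(k)$-kernels are matched by multiplication by $24$.
\end{enumerate}
Without this comparison diagram, ``matching the 24's'' has no way to reach the off-diagonal groups.
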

\begin{proof}
Define
\[W\subseteq V_{2,3}\times V_{4,6}=(x,y,a,b)\]
to be the vanishing locus of \(y^2-(x^3+ax+b)\) and
\[\begin{array}{cc}W_1=W\setminus(0,0,0,0)&W_2=W_1\setminus\{a=b=0\}\end{array}.\]
Then \([W_2/\mathbb{G}_m]=\overline{\mathcal{M}}_{1,2}\setminus {inf}\). The \(W\) is equivariantly isomorphic to \(V_{2,3,4}\).

We have thus an identification
\[\mathbb{Z}([W_1/\mathbb{G}_m])=\mathbb{Z}((O_{\mathbb{P}^{\infty}}(2)\oplus O_{\mathbb{P}^{\infty}}(3)\oplus O_{\mathbb{P}^{\infty}}(4))^{\times}).\]
We see by the same method as in \eqref{gysin} that \(\mathbb{Z}([W_1/\mathbb{G}_m])[1]\) is the mapping cone of the map
\begin{equation}\label{w1}
	\xymatrixcolsep{1pc}
	\xymatrix
	{
		\mathbb{Z}\oplus	&\mathbb{Z}/\eta(1)[2]\oplus	&\mathbb{Z}/\eta(3)[6]\oplus\ar[d]_{24\mathrm{Id}}	&\cdots\\
									&											&\mathbb{Z}/\eta(3)[6]\oplus						&\cdots
	}.
\end{equation}
So we have
\[\mathbb{Z}([W_1/\mathbb{G}_m])=\mathbb{Z}\oplus\mathbb{Z}/\eta(1)[2]\oplus\bigoplus_{i=1}^{\infty}C(24\mathrm{Id}_{\mathbb{Z}/\eta})(2i+1)[4i+1].\]

Let us compute \(\mathbb{Z}([W_2/\mathbb{G}_m])\). We have a Cartesian square
\[
	\xymatrix
	{
		O(1)^{\times}\ar[r]^-{i'}\ar[d]	&(O(2)\oplus O(3)\oplus O(4))^{\times}\ar[d]\\
		O(1)\ar[r]^-i						&O(2)\oplus O(3)\oplus O(4)\\
	}
\]
where \(i(\varphi)=(\varphi^2,\varphi^3,0)\). The \(i\) is a normalization map of its image and the \(i'\) is a closed immersion with \(s=i'_*\). So we have the equality
\[[W_1/\mathbb{G}_m]\setminus[W_2/\mathbb{G}_m]=O(1)^{\times},\]
thus a distinguished triangle by Lemma \ref{o1}
\[\mathbb{Z}([W_2/\mathbb{G}_m])\to\mathbb{Z}([W_1/\mathbb{G}_m])\to\mathbb{Z}(2)[4]\to\cdots[1]\]
where the second arrow is determined by a morphism \(s:\mathbb{Z}/\eta((1))\to\mathbb{Z}(2)[4]\) by Proposition \ref{ortho}. This gives an exact sequece
\[GW(k)\xrightarrow{s}\widetilde{CH}^2([W_1/\mathbb{G}_m])=\mathbb{Z}\to\widetilde{CH}^2([W_2/\mathbb{G}_m])\to 0.\]
Choose a general section \((f,g,h)\in(O(2)\oplus O(3)\oplus O(4))^{\times}\). We have
\[(f,g,h)\cap O(1)^{\times}=\{f^3=g^2,h=0\}\subseteq\mathbb{P}^{\infty}.\]
The cycle \(\{f^3=g^2,h=0\}\) is linearly equivalent to \(24O(1)^2\in CH^2(\mathbb{P}^{\infty})=\widetilde{CH}^2(\mathbb{P}^{\infty})\), hence \(s=24\). This implies that
\[\mathbb{Z}([W_2/\mathbb{G}_m])=\mathbb{Z}\oplus C(24\partial)(1)[1]\oplus\bigoplus_{i=1}^{\infty}C(24\mathrm{Id}_{\mathbb{Z}/\eta})(2i+1)[4i+1].\]

Define \(D(y)=\{((x:y:z),a,b)\in Z|y\neq 0\}\). Consider the inclusions
\[[V_{4,6}^{\times}/\mathbb{G}_m]\xrightarrow{inf}[D(y)/\mathbb{G}_m]\xrightarrow{v}[(V_{-1,-3}\times V_{4,6}^{\times})/\mathbb{G}_m].\]
Now we identify the normal bundle \(N_{inf}\) of \({inf}\). The \(v\circ inf\) is the zero section
\[(O(4)\oplus O(6))^{\times}\subseteq(O(-1)\oplus O(-3))\times_{\mathbb{P}^{\infty}}(O(4)\oplus O(6))^{\times},\]
whose normal bundle is \(O(-1)\oplus O(-3)\). The \(v\) is the inclusion of the vanishing locus of
\[z=x^3+axz^2+bz^3,\]
which has degree \(-3\). We have an exact sequnce
\[0\to N_{inf}\to N_{v\circ inf}\to N_v|_{[V_{4,6}^{\times}/\mathbb{G}_m]}\to0.\]
By
\[Pic(\mathbb{P}^{\infty})=Pic([V_{4,6}^{\times}/\mathbb{G}_m])=Pic(O(-1)\oplus O(-3)\oplus O(4)\oplus O(6))=Pic([D(y)/\mathbb{G}_m]),\]
the \(N_v|_{[V_{4,6}^{\times}/\mathbb{G}_m]}\) is identified with \(O(-3)\), hence we see that \(N_{inf}=O(-1)\). Hence we have a Gysin triangle
\begin{equation}\label{gy}[W_2/\mathbb{G}_m]\xrightarrow{i}\overline{\mathcal{M}}_{1,2}\to Th_{\overline{\mathcal{M}}_{1,1}}(O(-1))\to\cdots[1].\end{equation}
Now we want to show that the composite
\[[W_2/\mathbb{G}_m]\xrightarrow{i}\overline{\mathcal{M}}_{1,2}\xrightarrow{p}\overline{\mathcal{M}}_{1,1}\]
is an isomorphism in \(\widetilde{DM}(k)\) so \eqref{gy} splits. For this, it suffices show \([p\circ i,A]_{MW}\) is an isomorphism for
\[A=\mathbb{Z},\mathbb{Z}(2)[3], \mathbb{Z}(2)[4],\mathbb{Z}/\eta(n)[2n],\mathbb{Z}/\eta(n)[2n-1]\]
where \(n>0\) is odd by Lemma \ref{cone}. We have a morphism between Gysin triangles
\begin{equation}\label{g}
	\xymatrix
	{
		[W_2/\mathbb{G}_m]\ar[r]\ar[d]_{p\circ i}		&[W_1/\mathbb{G}_m]\ar[r]\ar[d]_a						&\mathbb{Z}(2)[4]\ar[r]\ar[d]	&\cdots[1]\\
		\overline{\mathcal{M}}_{1,1}\ar[r]	&\mathbb{P}^{\infty}\ar[r]^{e(O(4)\oplus O(6))}	&\mathbb{P}^{\infty}(2)[4]\ar[r]		&\cdots[1]
	}.
\end{equation}
\begin{enumerate}
\item\(A=\mathbb{Z}\). Clear.
\item\(A=\mathbb{Z}(2)[4]\). 
Applying \(\widetilde{CH}^2(-)\) to \eqref{g}, we have a commutative diagram with exact rows
\[
	\xymatrix
	{
		GW(k)\ar[r]\ar@{=}[d]	&\mathbb{Z}\ar[r]\ar@{=}[d]	&\widetilde{CH}^2(\overline{\mathcal{M}}_{1,1})\ar[r]\ar[d]_{(p\circ i)^*}	&0\\
		GW(k)\ar[r]				&\mathbb{Z}\ar[r]					&\widetilde{CH}^2([W_2/\mathbb{G}_m])\ar[r]												&0\\
	}.
\]
So the \((p\circ i)^*\) an isomorphism.
\item\(A=\mathbb{Z}(2)[3]\). By Lemma \ref{-1} and duality we have
\[Hom(\mathbb{Z}/\eta(2i+1)[4i+2],\mathbb{Z}(2)[3])=\begin{cases}0&i>0\\2K_1^M(k)&i=0\end{cases}.\]
Applying \(H^{3,2}_{MW}(-,\mathbb{Z})\) to \eqref{g}, we have a commutative diagram with exact rows
\[
	\xymatrix
	{
		0\ar[r]	&2K_1^M(k)\ar[r]\ar[d]_{a^*}	&H^{3,2}_{MW}(\overline{\mathcal{M}}_{1,1},\mathbb{Z})\ar[r]\ar[d]_{(p\circ i)^*}	&GW(k)\ar[r]\ar@{=}[d]	&\mathbb{Z}\ar@{=}[d]\\
		0\ar[r]	&2K_1^M(k)\ar[r]			&H^{3,2}_{MW}([W_2/\mathbb{G}_m],\mathbb{Z})\ar[r]													&GW(k)\ar[r]					&\mathbb{Z}
	}.
\]
The \(a^*\) is an isomorphism by applying \(H^{3,2}_{MW}(-,\mathbb{Z})\) to the Gysin triangle
\[(O(2)\oplus O(3)\oplus O(4))^{\times}\to\mathbb{P}^{\infty}\to Th(O(1))(2)[4]\to\cdots[1].\]
So the \((p\circ i)^*\) an isomorphism.
\item\(A=\mathbb{Z}/\eta(n)[2n]\). Applying \({E_1}^{2n,n}(-)\) to \eqref{g} we get a diagram with rows being exact 
\[
	\xymatrix
	{
		{E}_1^{2n-4,n-2}(\mathbb{P}^{\infty})\ar[r]^{e(O(4)\oplus O(6))}\ar@{->>}[d]	&E_1^{2n,n}(\mathbb{P}^{\infty})\ar[r]\ar[d]_{a^*}	&E_1^{2n,n}(\overline{\mathcal{M}}_{1,1})\ar[r]\ar[d]_{(p\circ i)^*}	&0\\
		{E}_1^{2n-4,n-2}(k)\ar[r]																		&E_1^{2n,n}([W_1/\mathbb{G}_m])\ar[r]				&E_1^{2n,n}([W_2/\mathbb{G}_m])\ar[r]																		&0
	}.
\]
There is an exact sequence
\[{E}_1^{2n-4,n-2}(Th(O(1)))\xrightarrow{e(O(2)\oplus O(3)\oplus O(4))}E_1^{2n,n}(\mathbb{P}^{\infty})\xrightarrow{a^*}E_1^{2n,n}([W_1/\mathbb{G}_m])\to 0\]
so \(ker(a^*)=Im(e(O(2)\oplus O(3)\oplus O(4)))\). Suppose \(n\geq3\). We have \({E}_1^{2n-4,n-2}(k)=0\) by Remark \ref{e1} and for same reason, 
\[Im(e(O(2)\oplus O(3)\oplus O(4)))=Im(e(O(4)\oplus O(6)))=\{(24a,24b)\in CH^{n}(\mathbb{P}^{\infty})\oplus CH^{n+1}(\mathbb{P}^{\infty})|a\equiv b(\textrm{mod }2)\}.\]
So \((p\circ i)^*\) is an isomorphism by Snake Lemma. If \(n=1\), we have \(Im(e(O(2)\oplus O(3)\oplus O(4)))=0\) so \(a^*\) is an isomorphism, hence so is \((p\circ i)^*\).
\item\(A=\mathbb{Z}/\eta(n)[2n-1]\). The map
\[{E}_1^{2n-4,n-2}(\mathbb{P}^{\infty})\xrightarrow{e(O(4)\oplus O(6))}E_1^{2n,n}(\mathbb{P}^{\infty})\]
is injective by Remark \ref{e1}. Applying \({E_1}^{2n-1,n}(-)\) to \eqref{g} we get a diagram with rows being exact
\[
	\xymatrix
	{
		E_1^{2n-5,n-2}(\mathbb{P}^{\infty})\ar[r]\ar@{->>}[d]_{u}	&E_1^{2n-1,n}(\mathbb{P}^{\infty})\ar[r]\ar[d]_{a^*}	&E_1^{2n-1,n}(\overline{\mathcal{M}}_{1,1})\ar[r]\ar[d]_{(p\circ i)^*}	&0\\
		E_1^{2n-5,n-2}(k)\ar[r]^-t												&E_1^{2n-1,n}([W_1/\mathbb{G}_m])\ar[r]					&E_1^{2n-1,n}([W_2/\mathbb{G}_m])\ar[r]			&0
	}.
\]
If \(n\neq 3\) the \(E_1^{2n-5,n-2}(k)=0\). If \(n=3\), by Lemma \ref{-1},
\[E_1^{1,1}(k)=E_1^{5,3}(\mathbb{Z}/\eta(1)[2])\subseteq E_1^{5,3}([W_1/\mathbb{G}_m]).\]
So \(t\) is injective. The \(Ker(u)\) amounts to compute \(A_j=Hom(\mathbb{Z}/\eta(2j-1)[4j-2],\mathbb{Z}/\eta(n-2)[2n-5])\) since \(u\) is induced by the pullback along a rational point of \(\mathbb{P}^{\infty}\). We have by Lemma \ref{-1}
\[A_j=\begin{cases}2K_1^M(k)&j=\frac{n-1}{2}\\K_1^M(k)&j=\frac{n-3}{2}\\0&\textrm{else}\end{cases}.\]
Meanwhile, the \(Ker(a^*)\), by \eqref{w1}, amounts to study the kernel of the map
\[Hom(\mathbb{Z}/\eta(2j+1)[4i+2],\mathbb{Z}/\eta(n)[2n-1])\xrightarrow{B_j}Hom(C(24\mathrm{Id}_{\mathbb{Z}/\eta})(2j+1)[4i+1],\mathbb{Z}/\eta(n)[2n-1]),j\geq1.\]
Here the first term can be computed by Lemma \ref{-1}. For the second term we observe that the
\[Hom(\mathbb{Z}/\eta(2j+1)[4j+1],\mathbb{Z}/\eta(n)[2n-1])\]
is torsion free so we have
\[Hom(C(24\mathrm{Id}_{\mathbb{Z}/\eta})(2j+1)[4j+1],\mathbb{Z}/\eta(n)[2n-1])=Hom(\mathbb{Z}/\eta(2j+1)[4j+2],\mathbb{Z}/\eta(n)[2n-1])/24.\]
Hence we see that
\[Ker(B_j)=\begin{cases}48K_1^M(k)&j=\frac{n-1}{2}\\24K_1^M(k)&j=\frac{n-3}{2}\\0&\textrm{else}\end{cases}.\]
The map \(A_j\to Ker(B_j)\) is the multiplication by \(24\), induced by that of \(e(O(4)\oplus O(6))\), hence the map \(Ker(u)\to Ker(a^*)\) surjective. Finally the \(B_j\) is clearly surjective, hence \(a^*\) is surjective. So by Snake Lemma we conclude that \((p\circ i)^*\) is an isomorphism.

By Lemma \ref{al}, we have shown that the \(Hom(p\circ i,-)\) induces an isomorphism of \(End(\overline{\mathcal{M}}_{1,1})\). Hence the \(p\circ i\) is an isomorphism by Yoneda Lemma. So \(i\) is split injective. So we conclude the proof.
\end{enumerate}
\end{proof}
However, the result of Theorem \ref{12} cannot be extended to twisted cases.
\begin{proposition}
Denote by \(O(1)\in Pic(\overline{\mathcal{M}}_{1,2})\) the pullback of \(O(1)\in Pic(\mathbb{P}^{\infty})\). We have
\[Th_{\overline{\mathcal{M}}_{1,2}\setminus inf}(O(1))\ncong Th_{\overline{\mathcal{M}}_{1,1}}(O(1))\]
in \(\widetilde{DM}(k)\).
\end{proposition}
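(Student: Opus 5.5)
The plan is to distinguish the two motives by an invariant that kills every block $\mathbb{Z}/\eta$, namely $\eta$-inversion. Concretely, I would compare $Hom(-,\mathbb{Z}(q)[p])\otimes_{GW(k)}$, or more cleanly the images in $\widetilde{DM}(k)[\eta^{-1}]$. Since $\mathbb{Z}/\eta=C(\eta)$, it becomes $0$ after inverting $\eta$. Hence every mapping cone built out of copies of $\mathbb{Z}/\eta$ (such as $C(24\mathrm{Id}_{\mathbb{Z}/\eta})$) also becomes $0$, because inversion of $\eta$ is an exact localization. On the other hand, $\mathbb{Z}[\eta^{-1}]\neq0$, since $Hom(\mathbb{Z},\mathbb{Z}[\eta^{-1}])=W(k)\ni\langle1\rangle\neq0$.

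\emph{Step 1: the right-hand side.} Since the Thom space only depends on the line bundle modulo squares, $Th_{\overline{\mathcal{M}}_{1,1}}(O(1))\cong Th_{\overline{\mathcal{M}}_{1,1}}(O(-1))$. By Theorem \ref{11}(2) this motive is
\[\mathbb{Z}/\eta(1)[2]\oplus\bigoplus_{i\geq1}C(24\mathrm{Id}_{\mathbb{Z}/\eta})(2i+1)[4i+1],\]
so it vanishes after inverting $\eta$. Compatibility of $\eta$-inversion with the colimit defining equivariant motives is harmless, as everything is a filtered colimit of compact objects.

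\emph{Step 2: the left-hand side.} I would redo the computation of $\mathbb{Z}([W_1/\mathbb{G}_m])$ and $\mathbb{Z}([W_2/\mathbb{G}_m])$ from the proof of Theorem \ref{12}, now twisted by $O(1)$.

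First consider $W_1$. The twisted Gysin triangle reads
\[Th_{[W_1/\mathbb{G}_m]}(O(1))\to Th_{\mathbb{P}^\infty}(O(1))\to Th_{\mathbb{P}^\infty}(O(1))(3)[6]\to.\]
By Proposition \ref{Pn}, $Th_{\mathbb{P}^\infty}(O(1))$ is a sum of copies of $\mathbb{Z}/\eta$. Hence $Th_{[W_1/\mathbb{G}_m]}(O(1))$ is an extension of $\mathbb{Z}/\eta$-blocks and vanishes after inverting $\eta$.

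Next consider $W_2$. The closed complement $[W_1/\mathbb{G}_m]\setminus[W_2/\mathbb{G}_m]=O(1)^\times$ has codimension $2$. On $O_{\mathbb{P}^\infty}(1)^\times$ the pullback of $O(1)$ is trivialized tautologically. Together with Lemma \ref{o1}, this gives $Th_{O(1)^\times}(O(1))\cong\mathbb{Z}(1)[2]$. One also has to account for the determinant of the normal bundle, but after restriction to $O(1)^\times$ it is again a power of the trivialized bundle. The Gysin triangle therefore becomes
\[Th_{[W_2/\mathbb{G}_m]}(O(1))\to Th_{[W_1/\mathbb{G}_m]}(O(1))\to\mathbb{Z}(3)[6]\to.\]
Inverting $\eta$ kills the middle term. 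This yields $Th_{\overline{\mathcal{M}}_{1,2}\setminus inf}(O(1))[\eta^{-1}]\cong\mathbb{Z}(3)[5][\eta^{-1}]$, which is nonzero because $Hom(\mathbb{Z}(3)[5],\mathbb{Z}(3)[5][\eta^{-1}])\supseteq W(k)\neq0$.

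\emph{Step 3: conclusion.} The two motives have non-isomorphic $\eta$-localizations, so they are not isomorphic. Equivalently, a suitable twisted group, namely $H^{3,2}$ of the $O(1)$-twist of $\overline{\mathcal{M}}_{1,2}\setminus inf$, acquires a $GW(k)$-contribution with nontrivial Witt part. This is visible as the kernel of $GW(k)=Hom(\mathbb{Z}(3)[6],\mathbb{Z}(3)[6])\to Hom(Th_{[W_1/\mathbb{G}_m]}(O(1)),\mathbb{Z}(3)[6])$, which has no counterpart on $\overline{\mathcal{M}}_{1,1}$.

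The main obstacle I expect is Step 2's identification of the third term. One must check that the restriction of $O(1)$ to the closed stratum $O(1)^\times$, tensored with the determinant of its normal bundle, is trivial modulo squares, so that the term is an honest Tate motive and not $\mathbb{Z}/\eta$-like. This requires the explicit normal bundle of $i'$, built from $O(2)\oplus O(3)\oplus O(4)$ modulo the image of $\varphi\mapsto(\varphi^2,\varphi^3,0)$, which I would compute as in the identification of $N_{inf}$ via Picard groups. If the twist came out nontrivial, the fallback is to use Lemma \ref{o1} in the form that any line bundle pulled back from $\mathbb{P}^\infty$ becomes trivial on $O(1)^\times$, since $Pic(O(1)^\times)=Pic(\mathrm{pt})=0$ at the equivariant level.
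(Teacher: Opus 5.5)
Your strategy is the one the paper uses: detect the difference after inverting $\eta$, i.e.\ in $\mathbf{W}$-cohomology. But Step 2 contains a concrete error that removes exactly the hard part.

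Set $E=O(2)\oplus O(3)\oplus O(4)$, so that $[W_1/\mathbb{G}_m]=E^{\times}$. In the $O(1)$-twisted Gysin triangle for $E^{\times}$, the third term is the Thom space of $E\oplus O(1)$. That is $Th_{\mathbb{P}^\infty}(O(1)\otimes\det E)(3)[6]$, not $Th_{\mathbb{P}^\infty}(O(1))(3)[6]$. Since $\det E=O(9)$ is odd, this equals $Th_{\mathbb{P}^\infty}(O(10))(3)[6]\cong\mathbb{Z}(\mathbb{P}^\infty)(4)[8]$, which contains $\mathbb{Z}(4)[8]$. Accordingly the paper finds $Th_{[W_1/\mathbb{G}_m]}(O(1))=\mathbb{Z}/\eta(1)[2]\oplus C(24\partial)(3)[5]\oplus\cdots$. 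Its $\eta$-localization is $\mathbb{Z}(4)[7][\eta^{-1}]\neq 0$. Over $\mathbb{R}$ this is visible too: the real points are $S^2_{h\mathbb{Z}/2}$ with $-1$ acting by a reflection, and the sign-twisted rational cohomology is $\mathbb{Q}$ in degree $2$.

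So ``inverting $\eta$ kills the middle term'' is false. The $\eta$-local triangle is really
$Th_{\overline{\mathcal{M}}_{1,2}\setminus inf}(O(1))[\eta^{-1}]\to\mathbb{Z}(3)[6][\eta^{-1}]\xrightarrow{w}\mathbb{Z}(3)[6][\eta^{-1}]$.
Here $w\in W(k)$ is the Witt component of the class $i'_*(1)\in\widetilde{CH}^2([W_1/\mathbb{G}_m],O(1))\cong W(k)\oplus 2\mathbb{Z}$ defining $s'$. The left side is $\eta$-locally nonzero if and only if $w$ is not a unit, and nothing in your plan determines $w$. The determinant you worried about lives on $O(1)^{\times}\simeq\mathrm{pt}$ and is indeed harmless; the one that matters is $\det E$ at the $W_1$ stage. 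Determining $w$ is precisely the content of the paper's proof, which asserts $w=0$ by factoring the pushforward through $H^2(O(2)\oplus O(3)\oplus O(4),\mathbf{W})=0$.

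You should know that this step appears to go the other way, so the gap may not be fillable along these lines. With the $O(1)$-twist, the pushforward along $i\colon\varphi\mapsto(\varphi^2,\varphi^3,0)$ starts from $H^0(\mathrm{Tot}\,O(1),\mathbf{W},\omega_i\otimes O(1))$ with $\omega_i=O(8)$. That group is $H^0(\mathbb{P}^\infty,\mathbf{W},O(1))=0$, so the left arrow of the paper's square is $0\to W(k)$, not an isomorphism.

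In fact, the odd-twisted $\mathbf{W}$-cohomology of both total spaces vanishes. So the localization boundaries identify $i'_*$ with the pushforward on cohomology supported on the zero sections, $H^1_0(\mathrm{Tot}\,O(1))\to H^3_0(\mathrm{Tot}\,E)$, and both groups are $\cong W(k)$. On Rost--Schmid complexes that map is the identity, because $i$ restricts to the identity on zero sections with equal residue fields. Hence $w$ is a unit and $Th_{\overline{\mathcal{M}}_{1,2}\setminus inf}(O(1))[\eta^{-1}]=0$, just as for $\overline{\mathcal{M}}_{1,1}$.

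Real points corroborate this. Deleting the free orbit $\{p,\bar p\}$ (the real points of the cusp, swapped by $-1$) from the reflected $S^2$ leaves a space that equivariantly retracts onto the fixed circle. So both sides become $S^1\times B\mathbb{Z}/2$ with the sign local system, which has no rational cohomology. Thus neither your argument nor the paper's Witt-cohomology argument can separate these two motives. A different invariant would be needed, and whether the proposition holds at all deserves re-examination.
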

\begin{proof}
We have
\[Th_{[W_1/\mathbb{G}_m]}(O(1))=Th_{(O_{\mathbb{P}^{\infty}}(2)\oplus O_{\mathbb{P}^{\infty}}(3)\oplus O_{\mathbb{P}^{\infty}}(4))^{\times}}(O(1)).\]
We see by the same method as in \eqref{gysin} that \(Th_{[W_1/\mathbb{G}_m]}(O(1))[1]\) is the mapping cone of the map
\[
	\xymatrixcolsep{1pc}
	\xymatrix
	{
		\mathbb{Z}/\eta(1)[2]\oplus	&\mathbb{Z}/\eta(3)[6]\oplus\ar[d]_{24\partial}	&\mathbb{Z}/\eta(5)[10]\oplus\ar[d]_{24\mathrm{Id}}	&\cdots\\
													&\mathbb{Z}(4)[8]\oplus								&\mathbb{Z}/\eta(5)[10]\oplus										&\cdots
	}.
\]
So we have
\[Th_{[W_1/\mathbb{G}_m]}(O(1))=\mathbb{Z}/\eta(1)[2]\oplus C(24\partial)(3)[5]\oplus\bigoplus_{i=2}^{\infty}C(24\mathrm{Id}_{\mathbb{Z}/\eta})(2i+1)[4i+1].\]
We have a distinguished triangle by Lemma \ref{o1}
\[Th_{[W_2/\mathbb{G}_m]}(O(1))\to Th_{[W_1/\mathbb{G}_m]}(O(1))\to\mathbb{Z}(3)[6]\to\cdots[1],\]
where the second arrow is determined by a map \(s':C(24\partial)(3)[5]\to\mathbb{Z}(3)[6]\). Apply \(Hom(-,\mathbb{Z}(3)[6])\) to the triangle we obtain
\[GW(k)\to\widetilde{CH}^2([W_1/\mathbb{G}_m],O(1))\to\widetilde{CH}^2([W_2/\mathbb{G}_m],O(1))\to0\]
where the second term is \(W(k)\oplus2\mathbb{Z}\) by Proposition \ref{ortho}. Note that there is an identification
\[H^{*,*}_{MW}(X,\mathbb{Z})[\eta^{-1}]=H^*(X,\textbf{W})\]
by \cite[Proposition 2.17]{Y2} for \(X\in Sm/k\), where \(\textbf{W}\) is the unramified Witt sheaf. By \(\mathbb{Z}/\eta[\eta^{-1}]=0\) we have
\[\begin{array}{cc}\mathbb{Z}(O(2)\oplus O(3)\oplus O(4))[\eta^{-1}]=\mathbb{Z}[\eta^{-1}]&Th_{\overline{\mathcal{M}}_{1,1}}(O(1))[\eta^{-1}]=0\end{array}.\]
There is a commutative diagram
\[
	\xymatrix
	{
		H^0(O(1),\textbf{W})\ar[r]\ar[d]_{\cong}	&H^2(O(2)\oplus O(3)\oplus O(4),\textbf{W})=0\ar[d]\\
		H^0(O(1)^{\times},\textbf{W})\ar[r]	&H^2([W_1/\mathbb{G}_m],\textbf{W})
	}
\]
so \(s'=(0,24)\in W(k)\oplus2\mathbb{Z}\) by its realization in Chow group, where the horizontal maps are push-forwards. Hence
\[H^*(Th_{[W_2/\mathbb{G}_m]}(O(1)),\textbf{W})\neq0.\]
But by Theorem \ref{11}, we have
\[H^*(Th_{\overline{\mathcal{M}}_{1,1}}(O(1)),\textbf{W})=0\]
so we conclude that
\[Th_{[W_2/\mathbb{G}_m]}(O(1))\ncong Th_{\overline{\mathcal{M}}_{1,1}}(O(1)).\]
\end{proof}
\begin{coro}\label{ring}
Denote by \(e(O(-1))\in\widetilde{CH}^1(\overline{\mathcal{M}}_{1,1},O(-1))\) the Euler class of \(O(-1)\). We have
\[\widetilde{CH}^*(\overline{\mathcal{M}}_{1,2})=\widetilde{CH}^*(\overline{\mathcal{M}}_{1,1})\oplus\widetilde{CH}^{*-1}(\overline{\mathcal{M}}_{1,1},O(-1))\]
with multiplication given by
\[(A,B)\cdot(C,D)=(AC,AD+BC+BD\cdot e(O(-1))).\] 
\end{coro}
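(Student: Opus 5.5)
The plan is to read the additive decomposition off the split Gysin triangle \eqref{gy} of Theorem \ref{12}, and then to compute products using the projection formula for the Gysin map attached to the closed immersion $inf$.

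\textbf{Additive part.} By Theorem \ref{12}, $p\circ i$ is an isomorphism, so $i^*$ admits the right inverse $(p\circ i)^{*-1}$ and $p^*$ is injective. Applying $Hom(-,\mathbb{Z}(n)[2n])$ to \eqref{gy} therefore gives a split short exact sequence
\[0\to\widetilde{CH}^{n-1}(\overline{\mathcal{M}}_{1,1},O(-1))\xrightarrow{inf_*}\widetilde{CH}^n(\overline{\mathcal{M}}_{1,2})\xrightarrow{i^*}\widetilde{CH}^n(\overline{\mathcal{M}}_{1,2}\setminus inf)\to0.\]
Here the identification of $Hom(Th_{\overline{\mathcal{M}}_{1,1}}(O(-1)),\mathbb{Z}(n)[2n])$ with the twisted group uses the definition of twisted cohomology. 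Identifying the quotient with $\widetilde{CH}^*(\overline{\mathcal{M}}_{1,1})$ via $p^*$, I obtain the isomorphism
\[(A,B)\mapsto p^*A+inf_*B .\]

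\textbf{Multiplicative part.} I will use three facts.
\begin{enumerate}
\item $p^*$ is a ring homomorphism, so $p^*A\cdot p^*C=p^*(AC)$.
\item The projection formula $p^*A\cdot inf_*D=inf_*(inf^*p^*A\cdot D)=inf_*(AD)$, which holds because $p\circ inf=\mathrm{id}$.
\item The self-intersection formula $inf_*B\cdot inf_*D=inf_*(B\cdot inf^*inf_*D)=inf_*(BD\cdot e(N_{inf}))$, where $N_{inf}=O(-1)$ was identified in the proof of Theorem \ref{12}.
\end{enumerate}
Expanding $(p^*A+inf_*B)(p^*C+inf_*D)$ with these three rules gives exactly $(AC,AD+BC+BD\cdot e(O(-1)))$. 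The twists are consistent: $B,D$ are twisted by $O(-1)$, so $BD$ is twisted by $O(-2)$, which is trivial in $Pic/2$; multiplying by $e(O(-1))$ then lands in the $O(-1)$-twisted group, as required.

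\textbf{Main obstacle.} The delicate step is justifying the projection and self-intersection formulas for Chow--Witt groups with twists, including the signs coming from graded commutativity in the twisted setting. The stacks must be handled through the approximations $X\times_{\mathbb{G}_m}U_n$, where these are standard facts for closed immersions of smooth schemes. They should pass to the limit because all maps involved are compatible with the transition maps, and in a fixed bidegree the groups stabilize by the argument of Proposition \ref{appro}. A further point to check is that the Gysin morphism in \eqref{gy}, under the Thom isomorphism, really induces the twisted pushforward $inf_*$. I would verify this through the deformation to the normal cone construction of the Gysin triangle.
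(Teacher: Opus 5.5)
Your proposal is correct and follows the paper's proof essentially step for step: the additive splitting $\widetilde{CH}^*(\overline{\mathcal{M}}_{1,2})=p^*\widetilde{CH}^*(\overline{\mathcal{M}}_{1,1})\oplus inf_*\widetilde{CH}^{*-1}(\overline{\mathcal{M}}_{1,1},O(-1))$ comes from the split Gysin triangle \eqref{gy}, and the products come from the projection formula together with the self-intersection formula $inf^*inf_*(b)=b\cdot e(O(-1))$, which the paper cites from \cite[Theorem 3.3]{F}. One notational slip: the right inverse of $i^*$ is $p^*\circ((p\circ i)^*)^{-1}$, not $((p\circ i)^*)^{-1}$ itself; the points you flag about twists, signs, passage to the limit and identifying the Gysin map with $inf_*$ are left implicit in the paper.
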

\begin{proof}
Denote by \(R_n=\widetilde{CH}^*(\overline{\mathcal{M}}_{1,1},O(n))\) and \(S=\widetilde{CH}^*(\overline{\mathcal{M}}_{1,2})\). By the splitting of the triangle \eqref{gy}, there is a decomposition
\[S=R_0\oplus inf_*(R_{-1})\]
as abenlian groups, where the ring map \(R_0\to S\) is given by the pullback along the projection \(p:\overline{\mathcal{M}}_{1,2}\to\overline{\mathcal{M}}_{1,1}\). Suppose \(a,b\in R_{-1}\). We have
\[p^*(a)\cdot inf_*(b)=inf_*(inf^*p^*(a)\cdot b)=inf_*(a\cdot b)\]
\[inf_*(a)\cdot inf_*(b)=inf_*(a\cdot inf^*inf_*(b))=inf_*(a\cdot b\cdot T),\]
which concludes the proof. Here the last equality follows from \cite[Theorem 3.3]{F}.
\end{proof}
Denote by \(I(k)\) the fundamental ideal of \(GW(k)\) and by \(h\in GW(k)\) the hyperbolic element. We have \(GW(k)/I(k)=\mathbb{Z}\) via the rank map, where \(h\) is sent to \(2\). Recall that the main theorem of \cite{LM} showed that the total Chow-Witt ring \(\widetilde{CH}^*(\overline{\mathcal{M}}_{1,1},O\oplus O(-1))\)
is isomorphic to
\[R=GW(k)[T,V,H]/(I(k)T,I(k)H,H^2-2h,hV,HV,V^2,24T^2,12HT^2-VT)\]
as \(\mathbb{N}\times\mathbb{Z}/2\)-graded algebra, where \(deg(T)=(1,1), deg(V)=(1,0), deg(H)=(0,1)\).
\begin{lemma}
The \(\widetilde{CH}^*(\overline{\mathcal{M}}_{1,1})\) is isomorphic to
\[R_0=\frac{GW(k)[TH,V,T^2]}{(I(k)TH,I(k)T^2,V^2,(TH)^2-2hT^2,hV,24T^2,12T^2TH-VT^2,THV)},\]
where \(deg(TH)=deg(V)=1, deg(T^2)=2\).
\end{lemma}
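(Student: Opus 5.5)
The plan is to obtain $R_0$ as the degree-$0$ part, with respect to the $\mathbb{Z}/2$-grading, of the Lorenzini--Mondal presentation $R$. By that theorem, $\widetilde{CH}^*(\overline{\mathcal{M}}_{1,1})$ is the $\mathbb{Z}/2$-degree $0$ part of $R$. A monomial $T^aV^bH^c$ has $\mathbb{Z}/2$-degree $a+c \bmod 2$. Using $H^2=2h$, $V^2=0$, $HV=0$ and $hV=0$, the even part is additively generated over $GW(k)$ by three families: monomials $T^{2m}$ and $T^{2m+1}H$; the products $VT^{2m}$; and further powers of $H$, which collapse since $H^2=2h\in GW(k)$. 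So the even part is generated as a $GW(k)$-algebra by $TH$, $V$ and $T^2$. First step: state this generation claim and check it by a monomial reduction.

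Second step: verify that the relations listed in $R_0$ hold in $R$. This is direct:
\begin{itemize}
\item $I(k)TH=0$ and $I(k)T^2=0$ follow from $I(k)T=0$.
\item $(TH)^2=T^2H^2=2hT^2$.
\item $V^2=0$ and $hV=0$ are relations of $R$.
\item $24T^2=0$ is a relation of $R$.
\item $12T^2\cdot TH=T\cdot(12HT^2)=T\cdot VT=VT^2$.
\item $TH\cdot V=T\cdot HV=0$.
\end{itemize}
This gives a surjective graded $GW(k)$-algebra map $R_0\to R^{\mathrm{even}}$.

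Third step: prove injectivity by comparing additive structures degree by degree. In $R_0$, reduce every element to a normal form. In degree $0$ this is $GW(k)$. In degree $1$ it is $GW(k)TH+GW(k)V$; here $TH$ generates a copy of $GW(k)/I=\mathbb{Z}$ and $V$ generates $GW(k)/h$, which is $W(k)$ for hyperbolic reasons. In degree $i\geq2$, the relations $V^2=0$, $THV=0$, $VT^2=12T^2TH$ and $(TH)^2=2hT^2$ show that the group is spanned by $T^{i}$ when $i$ is even, or by $T^{i-1}TH$ when $i$ is odd, of order dividing $24$, respectively $48$ after accounting for the factor $2$. Then compare with Corollary~\ref{ring11}: $GW(k)$, $W(k)\oplus2\mathbb{Z}$, $\mathbb{Z}/24$ in even degree, and $2\mathbb{Z}/48\cong\mathbb{Z}/24$ in odd degree. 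Since the map is surjective and the upper bounds on the orders coincide with the actual groups, it is an isomorphism in each degree.

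The main obstacle is making this order bound exact. In odd degrees we must show that $T^{2m}\cdot TH$ has order exactly $24$ in $R_0$, so that it matches the target; the relation $24T^2=0$ gives $24T^{2m}TH=0$. In degree $1$ we must check that no hidden relation between $TH$ and $V$ arises in $R_0$. I would settle both points by the surjection to the known groups of Corollary~\ref{ring11}, which bounds the orders from below, combined with the normal-form spanning argument, which bounds them from above.
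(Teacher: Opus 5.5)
Your strategy (map $R_0$ onto the even part of the ring $R$ of \cite{LM}, reduce to a spanning set, compare degree by degree) is the paper's, and your first two steps are correct. In degrees $i\geq 2$ the counting also works. There $(R_0)_i$ is cyclic, generated by $(T^2)^{i/2}$ or by $TH(T^2)^{(i-1)/2}$ (using $VT^2=12T^2TH$). It is killed by $I(k)$ and $24$, so its order is at most $24$. This bound is $24$, not $48$: the target $2\mathbb{Z}/48\mathbb{Z}$ is itself cyclic of order $24$. A surjection onto a group of order $24$ is then an isomorphism. Degree $0$ is just the structure map of $GW(k)$.

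The gap is in degree $1$. The only degree-$1$ relations in the presentation are $I(k)TH$ and $hV$, so $(R_0)_1=\mathbb{Z}\cdot TH\oplus W(k)\cdot V$ exactly. The issue is therefore not a hidden relation in $R_0$, but injectivity of $(R_0)_1\to\widetilde{CH}^1(\overline{\mathcal{M}}_{1,1})$.

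Corollary~\ref{ring11} only tells you that the target is abstractly $W(k)\oplus 2\mathbb{Z}$. These groups are infinite, so order bounds give nothing. A surjection of abelian groups $\mathbb{Z}\oplus W(k)\to W(k)\oplus 2\mathbb{Z}$ need not be injective, because $W(k)$ need not be Hopfian: for $k=\mathbb{C}(t)$ it is an infinite-dimensional $\mathbb{F}_2$-vector space. You need to know which elements generate the target.

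The paper gets this from \cite[Proposition 5.3.1]{LM}, which describes $R_{(1,0)}$ as $\mathbb{Z}\cdot TH\oplus W(k)\cdot V$ with these explicit generators. Alternatively, you can compute $R_{(1,0)}$ directly from the presentation of $R$:
\begin{itemize}
\item it is spanned by $TH^{2m+1}\equiv(2h)^mTH$ and $VH^{2m}\equiv(2h)^mV$;
\item apart from $(H^2-2h)$-multiples, the degree-$(1,0)$ part of the ideal is spanned by $I(k)TH^{2m+1}$, $hVH^{2m}$, $VH^{2m+2}$, and $I(k)VH^{2m+2}=I(k)(2h)^{m+1}V=0$;
\item this yields exactly $GW(k)/I(k)\cdot TH\oplus GW(k)/(h)\cdot V$.
\end{itemize}
If you prefer to keep comparing with Corollary~\ref{ring11}, you must check that its isomorphism is $GW(k)$-linear. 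Then use that a surjective endomorphism of a finitely generated module over a commutative ring is injective. With either repair the rest of your argument goes through.
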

\begin{proof}
There is a natural map \(R_0\to R\) using the same symbols. By the equalities \((TH)^2=2hT^2,12T^2TH=VT^2,THV=0,V^2=0\), the ring can be generated by \(1,THT^{2n},TH,V,T^{2n}\) as a \(GW(k)\)-module. Then one can show that
\[(R_0)_n=\begin{cases}GW(k)&n=0\\\mathbb{Z}\cdot TH\oplus W(k)\cdot V&n=1\\\mathbb{Z}/24\cdot(T^{2})^{\frac{n}{2}}&n\geq2\textrm{ is even}\\\mathbb{Z}/24\cdot TH(T^{2})^{\frac{n-1}{2}}&n\geq2\textrm{ is odd}\end{cases}=R_{(n,0)}\]
where the last equality follows from \cite[Proposition 5.3.1]{LM}.
\end{proof}
\begin{theorem}\label{ringex}
The \(\widetilde{CH}^*(\overline{\mathcal{M}}_{1,2})\) is isomorphic to
\[\frac{GW(k)[TH,V,T^2,T',H']}{\begin{pmatrix}I(k)T',I(k)H',I(k)TH,I(k)T^2,V^2,(TH)^2-2hT^2,hV,24T^2,12T^2TH-VT^2,THV,\\H'V,T^2H'-THT',H'^2-2hT',H'^2-THH',T'^2-T^2T',T'H'-T^2H',12T^2H'-VT'\end{pmatrix}}\]
as \(\mathbb{N}\)-graded algebras, where \(deg(TH)=deg(V)=deg(H')=1, deg(T^2)=deg(T')=2\).
\end{theorem}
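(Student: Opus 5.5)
The plan is to assemble the ring from Corollary \ref{ring}, the lemma presenting $R_0=\widetilde{CH}^*(\overline{\mathcal{M}}_{1,1})$, and the Lavelle--Mamede presentation $R$ of the total Chow--Witt ring $\widetilde{CH}^*(\overline{\mathcal{M}}_{1,1},O\oplus O(-1))$.

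By Corollary \ref{ring}, $S=\widetilde{CH}^*(\overline{\mathcal{M}}_{1,2})=R_{(*,0)}\oplus inf_*(R_{(*-1,1)})$. The multiplication is $(A,B)(C,D)=(AC,AD+BC+BD\cdot e(O(-1)))$.

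\emph{Generators and the map.} Inside $R$, the odd-twist part $R_{(*,1)}$ is generated as an $R_{(*,0)}$-module by $H$ (degree $0$) and $T$ (degree $1$). Indeed every monomial with odd $H$-degree plus $T$-degree reduces, via $H^2=2h$, to $H\cdot(\text{even})$ or $T\cdot(\text{even})$. I would set $H'=inf_*(H)$ of degree $1$ and $T'=inf_*(T)$ of degree $2$, and define a graded map $\Phi$ from the polynomial ring to $S$ sending $TH,V,T^2$ to $p^*$ of the same-named classes in $R_0$, and $H',T'$ as above. The key input is $e(O(-1))=T\in R_{(1,1)}$; I would justify this from the identification in \cite{LM}, where $T$ is the Euler class of the twisting line bundle. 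The product formula then gives
\[H'^2=inf_*(H\cdot H\cdot T)=inf_*(2hT)=2hT',\]
\[T'^2=inf_*(T^3)=T^2T',\qquad T'H'=inf_*(T^2H)=T^2H',\qquad H'^2=inf_*(H^2T)=inf_*(TH\cdot H)=THH'.\]
Likewise $T^2H'=inf_*(T^2H)=TH\cdot T'$, $H'V=inf_*(HV)=0$, $I(k)H'=inf_*(I(k)H)=0$ and $I(k)T'=0$. The relation $12T^2H'-VT'=inf_*(12HT^2-VT)=0$ holds as well. Together with the relations of $R_0$, this shows $\Phi$ factors through the stated quotient $Q$.

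\emph{Surjectivity and injectivity.} Surjectivity follows because $R_0$ maps onto the first summand. The second summand is $inf_*$ of $R_{(*,1)}$, which is spanned by $R_0\cdot H$ and $R_0\cdot T$, that is, by $R_0H'$ and $R_0T'$. For injectivity, I would compare graded pieces. Using the relations, $Q$ is spanned over $R_0$ by $1,H',T'$, and the products among $H',T'$ reduce into $R_0H'+R_0T'$. So $Q_n$ is a quotient of $(R_0)_n\oplus (R_0H'+R_0T')_n$. The main obstacle is the second part: I must show that the $R_0$-module generated by $H',T'$ modulo the listed relations has exactly the size of $R_{(n-1,1)}$. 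By Corollary \ref{ring11}, $R_{(n-1,1)}$ is $2\mathbb{Z}$ for $n=1$, $\mathbb{Z}$ for $n=2$, and $\mathbb{Z}/24$ or $2\mathbb{Z}/48$ alternately for $n\ge3$.

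To handle this, I would list explicit spanning sets. In degree $1$ we have $H'$; the relation $I(k)H'=0$ makes its span a quotient of $\mathbb{Z}$, matching $inf_*(2\mathbb{Z}\cdot H)$. In degree $2$ we have $T',THH',VH'=0$, with $THH'=H'^2=2hT'$. In higher degrees we have $T^{2k}T',T^{2k}H',T^{2k}THH'$, and the relation $12T^2H'=VT'$ ties the $2\mathbb{Z}/48$ pieces to the $\mathbb{Z}/24$ pieces. Since $\Phi$ is surjective and these spanning sets map onto groups of the known orders, a counting argument finishes the proof: I would check that each candidate spanning set has order at most that of the target, and then rigidity of surjections between these finite groups (or between copies of $\mathbb{Z}$) gives bijectivity in each degree.
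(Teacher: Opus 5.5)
Your proposal is correct and follows essentially the same route as the paper. Both define the map through Corollary~\ref{ring} with $T'\mapsto inf_*(T)$ and $H'\mapsto inf_*(H)$, verify the relations via the projection formula and $inf^*inf_*=\cdot\, T$, and match the spanning set $R_0,H',(T^2)^nT',TH(T^2)^nT'$ against $\widetilde{CH}^*(\overline{\mathcal{M}}_{1,1},O(-1))$. Your degreewise counting step (cyclic spanning groups surjecting onto the groups listed in Corollary~\ref{ring11}, together with injectivity of $p^*$ on the first summand) just spells out the injectivity that the paper calls ``clear''.
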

\begin{proof}
The algebra in the statement is rewritten as
\[S=\frac{R_0[T',H']}{\begin{pmatrix}I(k)T',I(k)H',H'V,T^2H'-THT',H'^2-2hT',\\H'^2-THH',T'^2-T^2T',T'H'-T^2H',12T^2H'-VT'\end{pmatrix}}.\]
By Corollary \ref{ring} there is an \(R_0\)-algebra homomorphism \(S\to\widetilde{CH}^*(\overline{\mathcal{M}}_{1,2})\) sending \(T'\) (resp. \(H'\)) to \(inf_*(T)\) (resp. \(inf_*(H)\)). For example, the \(T^2H'=THT'\) corresponds to
\[T^2inf_*(H)=inf_*(T\cdot TH)=inf_*(T)TH\]
and the \(T'H'-T^2H'\) corresponds to
\[inf_*(T)inf_*(H)=inf_*(T^2H)=T^2inf_*(H)\]
where equalities are in \(\widetilde{CH}^*(\overline{\mathcal{M}}_{1,2})\). By the equalities \(H'^2=2hT',T'^2=T^2T',T'H'=T^2H'\), the \(S\) is generated by \(1,T', H'\) as an \(R_0\)-module. By the equalities \(THH'=H'^2=2hT',H'V=0,T^2H'=THT',VT'=12T^2H'\), the \(S\) is generated by \(R_0,H',(T^{2})^nT',TH(T^{2})^nT'\) as a \(GW(k)\)-module. By \cite[Proposition 5.3.1]{LM}, the \(H',(T^{2})^nT',TH(T^{2})^nT'\) are the generators of \(\widetilde{CH}^*(\overline{\mathcal{M}}_{1,1},O(-1))\). So it is clear that \(S\cong\widetilde{CH}^*(\overline{\mathcal{M}}_{1,2})\) by Corollary \ref{ring}.
\end{proof}

{}
\end{document}